\numberwithin{equation}{section}
\newcommand{\q}{\quad}
\newcommand{\ee}{{\rm e}\hspace{1pt}}
\newcommand{\dd}{\hspace{0.5pt}{\rm d}\hspace{0.5pt}}
\newcommand{\merkthree}{\texttt{MERK3}}
\newcommand{\merkfour}{\texttt{MERK4}}
\newcommand{\merkfive}{\texttt{MERK5}}
\newcommand{\mis}{\texttt{MIS-KW3}}
\crefname{hypothesis}{Hypothesis}{Hypotheses}
\title{A new class of high-order methods for multirate differential equations \thanks{Submitted to the editors DATE.
\funding{This work was supported in part by the U.S. Department of
  Energy, Office of Science, Office of Advanced Scientific Computing
  Research, Scientific Discovery through Advanced Computing (SciDAC)
  program through the FASTMath Institute under Lawrence Livermore
  National Laboratory Subcontract B626484.}}}
\author{Vu Thai Luan, Rujeko Chinomona, Daniel
  R. Reynolds\thanks{Department of Mathematics, Southern Methodist
    University, Dallas, TX 75275-0156 (\email{vluan@smu.edu},
    \email{rchinomona@smu.edu}, \email{reynolds@smu.edu}).}
}
\begin{document}

\maketitle

% REQUIRED
\begin{abstract}
This work focuses on the development of a new class of high-order
accurate methods for multirate time integration of systems of ordinary
differential equations.  The proposed methods are based on a specific
subset of explicit one-step exponential integrators.  More precisely,
starting from an explicit exponential Runge--Kutta method of the
appropriate form, we derive a multirate algorithm to approximate the
action of the matrix exponential through the definition of modified
``fast'' initial-value problems.  These fast problems may be solved
using any viable solver, enabling multirate simulations through use of
a subcycled method.  Due to this structure, we name these
\emph{Multirate Exponential Runge--Kutta} (MERK) methods.
In addition to showing how MERK methods may be derived, we provide
rigorous convergence analysis, showing that for an overall method of
order $p$, the fast problems corresponding to internal stages may be
solved using a method of order $p-1$, while the final fast problem
corresponding to the time-evolved solution must use a method of order
$p$.  Numerical simulations are then provided to demonstrate the
convergence and efficiency of MERK methods with orders three through
five on a series of multirate test problems.
\end{abstract}

% REQUIRED
\begin{keywords}
  multirate time integration,
  multiple time stepping,
  exponential integrators,
  exponential Runge--Kutta methods,
%  exponential quadrature rules,
 % exponential time differencing (ETD) methods
\end{keywords}

% REQUIRED
\begin{AMS}
65L06, 65M20, 65L20
\end{AMS}

%%% body -------------------------------------------------------------------
\section{Introduction}
\label{section1}
In this paper, we focus on the construction, analysis, and implementation of
efficient, highly accurate, multirate time stepping algorithms, based
on various classes of explicit one-step exponential integrators. These
algorithms may be applied to initial value problems (IVPs) of the form
\begin{equation} \label{eq1}
  u'(t) = F(t,u(t)) = \mathcal{L}u(t) + \mathcal{N}(t,u(t)), \q  u(t_0)=u_0,
\end{equation}
on the interval $t_0< t \leq T$, where the vector field $F(t,u(t))$
can be decomposed into a linear part $\mathcal{L}u(t)$ comprising the ``fast''
time scale, and a nonlinear part $\mathcal{N}(t,u(t))$ comprising the ``slow''
time scale.  Such systems frequently result from so-called
``multi-physics'' simulations that couple separate physical processes
together, or from the spatial semi-discretization of time-dependent
partial differential equations (PDEs).  Our primary interest in this
paper lies in the case where the fast component is much less costly to
compute than the slow component, thereby opening the door for methods
that evolve each component with different time step sizes -- so-called
multirate (or multiple time-stepping, MTS) methods. This case is
common in practice when using a non-uniform grid for the spatial
semi-discretization of PDEs, or in parallel computations where the
fast component is comprised of spatially-localized processes but the
slow component requires communication across the parallel network.

In recent years, there has been renewed interest in the construction
of multirate time integration methods for systems of ODEs.
Generally, these efforts have focused on techniques to achieve orders
of accuracy greater than two, since second-order methods may be
obtained through simple interpolation between time scales.  These
recent approaches broadly fit into two categories: methods that attain
higher-order through extrapolation of low-order methods
\cite{Bouzarth2010, constantinescu2010, Constantinescu2013}, and methods
that directly satisfy order conditions for partitioned and/or additive
Runge--Kutta methods \cite{Fok2016, Gunther2001, Gunther2016,
  Knoth2012, Knoth2014, Kvaerno2000, Sandu2018, Sarshar2018,
  Schlegel2009, Schlegel2012a, Schlegel2012b, Sexton2018,
  Wensch2009}.
Of these, the latter category promises increased efficiency due to the
need to traverse the time interval only once.  However, only very
recently have methods of this type been constructed that can achieve
full fourth-order accuracy \cite{Sandu2018, Sexton2018}, and we know of
no previous methods having order five or higher.

Among numerical methods that use the same time step for all components
of \eqref{eq1}, exponential integrators have shown great promise
in recent years \cite{CM02, HL97, HLS98, HO05b, HO05a, HO10, LO13,LO14b, LO14a,LO16,Luan17,Luan18}.  
Most such methods require the approximation of products of
matrix functions with vectors, i.e., $\phi(\mathcal{L})\, v$, for
$\mathcal{L}\in \mathbb{R}^{d\times d}$ and $v\in \mathbb{R}^{d}$.

Inspired by recent results on local-time stepping methods for problems
related to \eqref{eq1} \cite{Gander2012, Grote2013b, Grote2010,
  Grote2013a}, and motivated by the idea in \cite[Sect.~5.3]{HO11}
that establishes a multirate procedure for exponential \emph{multistep}
methods of Adams-type, here we derive multirate procedures for exponential
\emph{one-step} methods.  Starting from an $s$-stage explicit exponential
Runge--Kutta (ExpRK) method applied to \eqref{eq1}, we employ the idea
of backward error analysis to define $s-1$ modified differential
equations whose exact solutions coincide with the ExpRK internal
stages.  These modified differential equations may then be
evolved using standard ODE solvers at the fast time scale.  We name
the resulting methods as \emph{Multirate Exponential Runge--Kutta}
(MERK) methods.

The ability to construct modified ODEs for each slow ExpRK stage is
dependent on the form of the ExpRK method itself, and we
identify these restrictions within this manuscript.
Using this approach, we derive a general multirate algorithm
(Algorithm \ref{alg2}) that can be interpreted as a
particular implementation (without matrix functions) of explicit
exponential Runge-Kutta methods.  With this algorithm in
hand, we perform a rigorous convergence analysis for the proposed
MERK methods.  We additionally construct MERK schemes with orders of
accuracy three through five, based on some well-known ExpRK methods
from the literature.

We note that the resulting methods show strong similarities to the
\emph{MIS} methods in \cite{Knoth2012, Knoth2014, Schlegel2009,
  Schlegel2012a, Schlegel2012b, Wensch2009} and the follow-on
\emph{RMIS} methods \cite{Sexton2018} and
\emph{MRI-GARK} methods \cite{Sandu2018}, in that the MERK algorithm
requires the construction of a set of modified ``fast'' initial-value
problems that must be solved to proceed between slow stages, and where
these modifications take the form of polynomials based on ``slow''
function data.  While these approaches indeed result in similar
algorithmic structure, (R)MIS and MRI-GARK methods are based on
partitioned and generalized-structure additive Runge--Kutta theory
\cite{Sandu2015}, and as such their derivation requires satisfaction
of many more order conditions than MERK methods, particularly as the
desired method order increases, to the end that no MIS method of order
greater than three, and no RMIS or MRI-GARK methods of order greater
than four, have ever been proposed.  Additionally, to obtain an overall
order $p$ method, all fast IVPs for (R)MIS and MRI-GARK methods must be
solved to order $p$, whereas the internal stages in MERK methods may
use an order $p-1$ solver. Finally, both (R)MIS and the MRI-GARK methods
require sorted abcissae $c_1\le c_2\le\cdots\le c_s$, a requirement
that is not present for MERK methods.

The outline of this paper is as follows: in Section~\ref{sec2}, we
derive the general class of exponential Runge-Kutta methods in a way
that facilitates construction of MERK procedures. In
Section~\ref{section3}, we then derive the general MERK algorithm for
exponential Runge--Kutta methods, and provide a rigorous convergence
analysis for these schemes. In Section~\ref{section4}, we derive
specific MERK methods based on existing exponential Runge--Kutta
methods.  We present a variety of numerical examples in
Section~\ref{sec6} to illustrate the efficiency of the new MERK
schemes with order of accuracy up to five.  The main contributions of
this paper are Algorithm~\ref{alg2}, convergence analysis for MERK
methods (Theorem \ref{theorem2}), and the proposed MERK schemes with
order of accuracy up to five.

%%% Local Variables:
%%% mode: latex
%%% TeX-master: "MERK"
%%% End:

%%%%%%%%%%%%%%%%%%%%%%%%%%%%%%%%%
\section{Motivation}
\label{sec2}
We begin with a general derivation of exponential Runge--Kutta methods \cite{HO05b,HO10}, which motivates a multirate procedure for solving \eqref{eq1}.
% For more details on the analytical framework, convergence results and the construction of such exponential integrators, we refer the reader to \cite{HO05b,HO10,HOS09}.
\subsection{Exponential Runge--Kutta methods}
\label{sec2.1}
When deriving ExpRK methods, it is crucial to represent the exact
solution of \eqref{eq1} at time $t_{n+1}=t_n +H$ using the
variation-of-constants formula,
\begin{equation} \label{eq3}
  u(t_{n+1})=u(t_n +H)=\ee^{H \mathcal{L}} u(t_n) + \int_{0}^{H} \ee^{(H-\tau)\mathcal{L}} \mathcal{N}(t_n+\tau, u(t_n+\tau)) \dd\tau.
\end{equation}
The integral in \eqref{eq3} is then approximated using a quadrature
rule having nodes $c_i$ and weights $b_i(H\mathcal{L})$ ($i=1,\ldots,s$), which
yields
\begin{equation} \label{eq3a}
  u(t_{n+1})\approx \ee^{H \mathcal{L}} u(t_n) + H\sum_{i=1}^{s} b_i(H\mathcal{L})\, \mathcal{N}(t_n+c_i H, u(t_n+c_i H)).
\end{equation}
By applying \eqref{eq3} (with $c_i H$ in place of $H$), the unknown
intermediate values $u(t_n+c_i H)$ in \eqref{eq3a} can be represented
as
\begin{equation} \label{eq3b}
  u(t_n+c_i H)=\ee^{c_i H \mathcal{L}}u(t_n) + \int_{0}^{c_i H} \ee^{(c_i H-\tau)\mathcal{L}} \mathcal{N}(t_n+\tau, u(t_n+\tau)) \dd\tau.
\end{equation}
Again, one can use another quadrature rule with the same nodes $c_i$
as before (to avoid the generation of new unknowns) and new weights
$a_{ij}(H\mathcal{L})$ to approximate the integrals in \eqref{eq3b}. This gives
\begin{equation} \label{eq3c}
  u(t_n+c_i H)\approx \ee^{c_i H \mathcal{L}}u(t_n) + H\sum_{j=1}^{s} a_{ij}(H\mathcal{L})\,\mathcal{N}(t_n+c_j H, u(t_n+c_j H)).
\end{equation}
Now, denoting the approximations $u_n \approx u(t_n)$ and $U_{n,i}
\approx u(t_n+c_i H)$, then from \eqref{eq3a} and \eqref{eq3c} one may
obtain the so-called \emph{exponential Runge--Kutta methods} 
\begin{subequations} \label{eq4}
\begin{align}
  U_{n,i}&= \ee^{c_i H  \mathcal{L}}u_n +H \sum_{j=1}^{s}a_{ij}(H \mathcal{L})\, \mathcal{N}(t_n +c_j H, U_{n,j}), \q  i=1,\ldots,s,  \label{eq4a} \\
  u_{n+1} &= \ee^{H \mathcal{L}}u_n +  H \sum_{i=1}^{s}b_{i}(H \mathcal{L})\, \mathcal{N}(t_n +c_i H, U_{n,i}).  \label{eq4b}
\end{align}
\end{subequations}
%It turns out that the equilibria of \eqref{eq1} are preserved if the coefficients $a_{ij}(z)$ and
%$b_i(z)$ of the method fulfill the following simplifying assumptions (see \cite{HO05b})
%\begin{equation}  \label{eq5}
%\sum_{i=1}^{s}b_{i}(z)= \varphi _{1} (z), \quad \quad
%\sum_{j=1}^{s}a_{ij}(z)=c_i \varphi _{1} (c_i z), \quad i=1,\ldots,s,
%\end{equation}
%where $\varphi_{1}(z)=(\ee^z -1)/z$.\\
%Throughout the paper we consider methods of the general form \eqref{eq4} that satisfy \eqref{eq5}.

The formula \eqref{eq4} is considered {\em explicit} when
$a_{ij}(H \mathcal{L})=0$ for all $i\leq j$ (thus $c_1=0$ and
consequentially $U_{n,1}=u_n$).  Throughout this paper we restrict our
attention to explicit exponential Runge--Kutta methods,
%The sum in \eqref{eq4a} is then considered over index $j$ from $1$ to
%$i-1$ only. Thus, the internal stages $U_{n,i}$ ($i=2,\ldots,s$) can
%be computed explicitly one after the other, which can be finally
%inserted into \eqref{eq4b} to find $u_{n+1}$. 
which can be reformulated as (see \cite{LO14b,LO12b}):
\begin{subequations} \label{eq:expRK}
\begin{align}
  U_{n,i}&= u_n + c_i H \varphi _{1} ( c_i H \mathcal{L})F(t_n, u_n) +
          H \sum_{j=2}^{i-1}a_{ij}(H \mathcal{L}) D_{n,j}, \  i=2,\ldots,s,  \label{eq:expRKa} \\
  u_{n+1}& = u_n + H \varphi _{1} ( H \mathcal{L})F(t_n, u_n) + H \sum_{i=2}^{s}b_{i}(H \mathcal{L}) D_{n,i}  \label{eq:expRKb},
\end{align}
\end{subequations}
where
\begin{equation} \label{eq:Dni}
  D_{n,i}= \mathcal{N}(t_n+c_i H, U_{n,i})- \mathcal{N}(t_n, u_n ), \qquad  i=2,\ldots,s.
\end{equation}
Here, the coefficients $a_{ij}(H \mathcal{L})$ and $b_{i}(H \mathcal{L}) $ are often
linear combinations of the functions $\varphi _{k} (c_i H \mathcal{L})$ and
$\varphi_{k} (H \mathcal{L})$, respectively, wherein $\varphi_k (z)$ are given by
\begin{equation} \label{eq8}
  \varphi_{k}(z)=\int_{0}^{1} \ee^{(1-\theta )z} \frac{\theta^{k-1}}{(k-1)!}\dd\theta , \quad k\geq 1,
\end{equation}
and satisfy the recurrence relations
\begin{equation} \label{eq9}
  \varphi_{k}(z)=\frac{\varphi_{k-1}(z)-\varphi_{k-1}(0)}{z}, \q  \varphi_{0}(z)=\ee^z.
\end{equation}

%Clearly, an efficient algorithm for computing the products of matrix functions with vectors of the form $\phi (h \mathcal{L})v$,  $\mathcal{L}\in \mathbb{R}^{d\times d}, \ v\in \mathbb{R}^{d}$
%plays an important role in implementing \eqref{eq4}. As we have mentioned in the Introduction, there are several options for performing this task, depending on the structure of $\mathcal{L}$.
%In contrast to these approaches, however,  we introduce an alternative way to implement the class of exponential one-step methods \eqref{eq4} without matrix functions.
%We subsequently consider this method as a multirate algorithm.

%%%-----------------------------------
\subsection{Adopting the idea of backward error analysis}
\label{sec2.2}
Motivated by the idea of \cite[Sect. 5.3]{HO11}, and recalling the
equations \eqref{eq3} and \eqref{eq3b}, we note that $u(t_{n+1})$ and
$u(t_n +c_i H)$ are the exact solutions of the differential equation
\begin{equation}  \label{eq6}
  v'(\tau)=\mathcal{L} v(\tau) + \mathcal{N}(t_n+\tau,u(t_n+\tau)), \q  v(0)=u(t_n),
\end{equation}
evaluated at $\tau=H$ and $\tau=c_i H$, respectively. In other words,
solving \eqref{eq6} exactly (by means of using the
variation-of-constants formula) on the time intervals $[0, H]$ and
$[0, c_i H]$ shows that $v(H)=u(t_{n+1})$ and $v(c_i H)=u(t_n +c_i H)$.
Unfortunately, explicit representations of these analytical solutions
are generally impossible to find, since $u(t_n)$ and $u(t_n+\tau)$ are
unknown values.  This observation, however, suggests the use of
backward error analysis (see, for instance \cite[Chap.~IX]{HLW06}).

Given an exponential Runge-Kutta method \eqref{eq4}, we therefore
search for modified differential equations of the form \eqref{eq6},
such that their exact solutions at $\tau=c_i H$ and $\tau=H$ coincide
with the ExpRK approximations $U_{n,i}$ ($i=2,\ldots,s$) and
$u_{n+1}$, respectively.  We may then approximate solutions to these
modified equations to compute our overall approximation of \eqref{eq1}.
%In the next section, we will show how this can be done.

%%% Local Variables:
%%% mode: latex
%%% TeX-master: "MERK"
%%% End:

%%%%%%%%%%%%%%%%%%%%%%%%%%%%%%%%%
%%%-------------------------------------------------------
\section{Multirate exponential Runge--Kutta methods}
\label{section3}
In this section, we construct a new multirate procedure based on
approximation of ExpRK schemes; we call the resulting algorithms
\emph{Multirate Exponential Runge-Kutta} (MERK) methods.  Following
this derivation, we present a rigorous stability and convergence
analysis.
%%%%-----------------------------------------------------------------------
\subsection{Construction of modified differential equations}
\label{section3.1}
%The class of ETD methods (see for example \cite{CM02,F78}) is a particular and important subclass of exponential Runge-Kutta methods in the explicit form of \eqref{eq4}.
We begin with the construction of MERK methods, through definition of
modified differential equations corresponding with the ExpRK stages
$U_{n,i}$ ($i=2,\ldots,s$) and solution $u_{n+1}$.
%---THEOREM 3.1----
\begin{theorem}\label{theorem1}
  Assuming that the coefficients $a_{ij}(H \mathcal{L})$ and $b_{i}(H
  \mathcal{L})$ of an explicit exponential Runge--Kutta method
  \eqref{eq:expRK} may be written as linear combinations
  \begin{equation} \label{eq7}
    a_{ij}(H \mathcal{L})=\sum_{k=1}^{\ell_{ij}}\alpha^{(k)}_{ij}\varphi_{k}(c_i H\mathcal{L}), \q
    b_{i}(H \mathcal{L})=\sum_{k=1}^{m_i}\beta^{(k)}_{i}\varphi_{k}(H \mathcal{L})
  \end{equation}
  for some positive integers $\ell_{ij}$ and $m_i$, and where the functions
  $\varphi _{k} (c_i H \mathcal{L})$ and $\varphi_{k} (H \mathcal{L})$
  are given in \eqref{eq8}, then $U_{n,i}$ and $u_{n+1}$  are the
  exact solutions of the linear differential equations
  \begin{subequations} \label{eq14}
    \begin{align}
      v'_{n,i}(\tau)&=\mathcal{L}v_{n,i}(\tau) +   p_{n,i}(\tau), && v_{n}(0)=u_n, \qquad  i=2,\ldots,s, \label{eq14a} \\
      v'_n(\tau)&=\mathcal{L}v_n(\tau)  +   q_{n}(\tau),  &&  v_n(0)=u_n \hspace{2.3cm} \label{eq14b}
    \end{align}
  \end{subequations}
  at the times $\tau= c_i H$ and $\tau= H$, respectively. Here
  $p_{n,i}(\tau)$ and $q_{n}(\tau)$ are polynomials in $\tau$ given by
  \begin{subequations} \label{eq13}
    \begin{align}
      p_{n,i}(\tau)&= \mathcal{N}(t_n, u_n)+\sum_{j=2}^{i-1} \Big(\sum_{k=1}^{\ell_{ij}}\dfrac{\alpha^{(k)}_{ij}}{c^k_i H^{k-1} (k-1)!}\tau^{k-1}\Big) D_{n,j}, \label{eq13a} \\
      q_{n}(\tau) &= \mathcal{N}(t_n, u_n)+\sum_{i=2}^{s} \Big(\sum_{k=1}^{m_i}\dfrac{\beta^{(k)}_{i} }{H^{k-1}(k-1)!}\tau^{k-1} \Big) D_{n,i}.  \label{eq13b}
    \end{align}
  \end{subequations}
\end{theorem}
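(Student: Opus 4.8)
The plan is to solve the modified linear initial value problems \eqref{eq14} exactly via the variation-of-constants formula and to verify that the resulting solutions, evaluated at $\tau=c_iH$ and $\tau=H$, reproduce precisely the reformulated ExpRK stages \eqref{eq:expRKa} and update \eqref{eq:expRKb}. The crucial simplification is that the forcing terms $p_{n,i}(\tau)$ and $q_n(\tau)$ in \eqref{eq13} are explicit polynomials in $\tau$ that do not depend on the unknown solution, so each modified equation is a \emph{linear} inhomogeneous ODE whose exact solution is available in closed form.

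First I would write the exact solution of \eqref{eq14a} as
\[
  v_{n,i}(\tau)=\ee^{\tau\mathcal{L}}u_n+\int_0^\tau \ee^{(\tau-s)\mathcal{L}}\,p_{n,i}(s)\dd s,
\]
and substitute \eqref{eq13a}. This separates the integral into a constant piece carrying $\mathcal{N}(t_n,u_n)$ and a double sum over $j$ and $k$ of monomial integrals $\int_0^\tau \ee^{(\tau-s)\mathcal{L}}s^{k-1}\dd s$ weighted by the $D_{n,j}$. The central step is to evaluate these monomial integrals at $\tau=c_iH$: the change of variables $s=\theta c_iH$ turns each one into $(c_iH)^k(k-1)!\,\varphi_k(c_iH\mathcal{L})$ directly from the definition \eqref{eq8}. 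Because the normalizing factor $1/(c_i^kH^{k-1}(k-1)!)$ in \eqref{eq13a} is chosen exactly to cancel these powers, each term collapses to $H\varphi_k(c_iH\mathcal{L})D_{n,j}$, and summing over $k$ recovers $H\,a_{ij}(H\mathcal{L})D_{n,j}$ by the hypothesis \eqref{eq7}.

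It then remains to reconcile the two free terms. The variation-of-constants solution naturally produces $\ee^{c_iH\mathcal{L}}u_n+c_iH\varphi_1(c_iH\mathcal{L})\mathcal{N}(t_n,u_n)$, whereas \eqref{eq:expRKa} displays $u_n+c_iH\varphi_1(c_iH\mathcal{L})F(t_n,u_n)$. I would bridge this gap using the recurrence \eqref{eq9}: since $\varphi_0(z)=\ee^z$, we have $z\varphi_1(z)=\ee^z-1$, hence $c_iH\mathcal{L}\,\varphi_1(c_iH\mathcal{L})=\ee^{c_iH\mathcal{L}}-I$. Applying this identity to $u_n$ and using the splitting $F(t_n,u_n)=\mathcal{L}u_n+\mathcal{N}(t_n,u_n)$ from \eqref{eq1} shows the two expressions agree, which establishes $v_{n,i}(c_iH)=U_{n,i}$.

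Finally, the argument for \eqref{eq14b} proceeds identically, with $c_i$ replaced by $1$, $p_{n,i}$ replaced by $q_n$, the coefficients $\alpha_{ij}^{(k)}$ replaced by $\beta_i^{(k)}$, and $\varphi_k(c_iH\mathcal{L})$ replaced by $\varphi_k(H\mathcal{L})$, yielding $v_n(H)=u_{n+1}$. I do not expect a substantive obstacle: the computation amounts to a single integration-by-definition together with one recurrence identity. The only point demanding care is the bookkeeping of the powers of $c_i$ and $H$ in the monomial integrals, which is precisely what the seemingly awkward denominators in \eqref{eq13} are engineered to cancel; verifying that these cancellations produce $H\varphi_k$ uniformly in $k$ is the heart of the matter.
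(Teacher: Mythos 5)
Your proposal is correct and follows essentially the same route as the paper: both arguments rest on the rescaled integral representation $\varphi_k(z)=\frac{1}{H^k}\int_0^H \ee^{(H-\tau)z/H}\frac{\tau^{k-1}}{(k-1)!}\dd\tau$ (your monomial-integral computation is exactly this identity read backwards) and on the identity $Z\varphi_1(Z)=\ee^Z-I$ together with $F=\mathcal{L}u+\mathcal{N}$ to reconcile the free terms. The only difference is direction — the paper rewrites the ExpRK scheme into variation-of-constants form while you integrate the modified ODE forward and match it to the scheme — which is the same computation in reverse.
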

%----------------------
\begin{proof}
  By changing the integration variable to $\tau=H\theta$ in \eqref{eq8}, we obtain
  \begin{equation} \label{eq10}
    \varphi _{k}(z)=\frac{1}{H^k}\int_{0}^{H} \ee^{(H-\tau)\frac{z}{H}} \frac{\tau^{k-1}}{(k-1)!}\dd\tau , \quad k\geq 1.
  \end{equation}
  Substituting $z=c_i H\mathcal{L}$ and $z=H\mathcal{L}$ into \eqref{eq10} and inserting the obtained results for $\varphi _{k} (c_i H \mathcal{L})$ and $\varphi_{k} (H \mathcal{L})$ into \eqref{eq7}  shows that
  \begin{subequations}\label{eq11}
    \begin{align}
      a_{ij}(H \mathcal{L})&=\int_{0}^{c_i H} \ee^{(c_i H-\tau)\mathcal{L}} \sum_{k=1}^{\ell_{ij}}\dfrac{\alpha^{(k)}_{ij}}{(c_i H)^{k} (k-1)!}\tau^{k-1}\dd\tau, \label{eq11a}\\
      b_{i}(H \mathcal{L})&=\int_{0}^{H} \ee^{(H-\tau)\mathcal{L}} \sum_{k=1}^{m_i}\dfrac{\beta^{(k)}_{i}}{H^{k}(k-1)!}\tau^{k-1}\dd\tau. \label{eq11b}
    \end{align}
  \end{subequations}
  Using the fact that $F(t_n, u_n)=\mathcal{L}u_n +\mathcal{N}(t_n, u_n)$ and
  \begin{equation} \label{eq:varphi1}
    \varphi _{1} (Z)=\frac{1}{H}\int_{0}^{H} \ee^{(H-\tau)\frac{Z}{H}} \dd\tau=(\ee^Z- I) Z^{-1}.
  \end{equation}
  we can write \eqref{eq:expRK} in an equivalent form,
  \begin{subequations} \label{eq:expRKnew}
    \begin{align}
      U_{n,i}&=  \ee^{c_i  H \mathcal{L}} u_n + c_i H \varphi _{1} ( c_i H \mathcal{L})\mathcal{N}(t_n, u_n) +
              H \sum_{j=2}^{i-1}a_{ij}(H \mathcal{L}) D_{n,j},  \label{eq:expRKnewa} \\
      u_{n+1}& = \ee^{H \mathcal{L}}u_n+ H \varphi _{1} ( H \mathcal{L})\mathcal{N}(t_n, u_n) + H \sum_{i=2}^{s}b_{i}(H \mathcal{L}) D_{n,i}  \label{eq:expRKnewb},
    \end{align}
  \end{subequations}
  for $i=2,\ldots,s$.  We now insert the integral form of
  $\varphi_1(Z)$ in \eqref{eq:varphi1} (with $Z= c_i H \mathcal{L}$
  and $Z=H \mathcal{L}$) and \eqref{eq11} into \eqref{eq:expRKnew} to
  get
  \begin{subequations} \label{eq12}
    \begin{align}
      U_{n,i}&= \ee^{c_i  H \mathcal{L}} u_n +\int_{0}^{c_i H} \ee^{(c_i H-\tau)\mathcal{L}}   p_{n,i}(\tau) \dd\tau, \q  i=2,\ldots,s,  \label{eq12a} \\
      u_{n+1} &= \ee^{H \mathcal{L}}u_n +  \int_{0}^{H} \ee^{(h-\tau)\mathcal{L}}  q_{n}(\tau) \dd\tau  \label{eq12b}
    \end{align}
  \end{subequations}
  with $p_{n,i}(\tau)$ and $q_{n}(\tau)$ as shown in \eqref{eq13}.
  Clearly, these representations (variation-of-constant formulas)
  show the conclusion of Theorem~\ref{theorem1}.  In particular,
  $U_{n,i}=v_{n,i}(c_i H)$ and $u_{n+1}=v_n(H)$. Thus one can consider
  \eqref{eq14} as modified differential equations with identical
  solutions as the ExpRK approximations to \eqref{eq6}.
\end{proof}
We note that the idea of using an ODE to represent a linear combination of matrix-vector $\varphi_{k}(A)v_k$ was also used in \cite{Niesen2012}.
%%%%-----------------------------------------------------------------------
\subsection{MERK methods and a multirate algorithm}
% Note that the given forms of $ p_{n,i}(\tau)$ and $q_{n}(\tau)$ in \eqref{eq13} are not explicitly defined (except for $ p_{0,2}(\tau)$ and $q_{0,1}(\tau)$ that we know  by using the given value $U_{0,1}=u_0$).
Clearly, the polynomials \\
$p_{n,i}(\tau)$ and $q_{n}(\tau)$ in
\eqref{eq13} are not given analytically since $D_{n,i}$ are unknowns;
however, these polynomials can be numerically determined as
follows.  For simplicity, we illustrate our procedure by starting with
$n=0$ and $i=s=2$.  In this case we know $u_0=u(t_0)$ and
$p_{0,2}(\tau)=\mathcal{N}(t_0, u_0)$, so one can solve the ODE
\eqref{eq14a} on $[0, c_2 H]$ to get an approximation to $U_{0,2}$,
$\widehat{U}_{0,2}\approx U_{0,2}=v_{0,2}(c_2 H)$.  Then replacing the
unknown $U_{0,2}$ in \eqref{eq13b} by $\widehat{U}_{0,2}$, we have
\[
  \hat{q}_0(\tau)=\mathcal{N}(t_0, u_0)+\sum_{k=1}^{m_i}\dfrac{\beta^{(k)}_{2} }{H^{k-1}(k-1)!}\tau^{k-1}
  \widehat{D}_{0,2},
\]
where $\widehat{D}_{0,2}=\mathcal{N}(t_0+c_2 H, \widehat{U}_{0,2})-
\mathcal{N}(t_0, u_0)$.  Since $\hat{q}_0(\tau) \approx q_0(\tau)$, we
may then solve the ODE \eqref{eq14b} on $[0, H]$ with $\hat{q}_0(\tau)$ in
place of $q_0(\tau)$ to obtain an approximation $\hat{u}_1 \approx u_1=v_0 (H)$.
%Knowing  $\hat{u}_1$ we can determine $\hat{p}_{1,2}(\tau) \approx p_{1,2}(\tau)$.

This general process may be extended to larger numbers of stages $s\ge
2$ and for subsequent time steps $n\ge 0$.  Approximating $\hat{u}_n \approx
u_n$ (with $\hat{u}_0=u_0$), then for $i=2,\ldots,s$, we define the
following perturbed linear ODEs over $\tau \in [0, c_i H]$:
\begin{equation} \label{eq13nc}
  y'_{n,i}(\tau)=\mathcal{L}y_{n,i}(\tau) + \hat{p}_{n,i}(\tau), \q  y_{n,i}(0)=\hat{u}_n,
\end{equation}
with
\begin{align}
  \label{eq:p_hat}
  \hat{p}_{n,i}(\tau) &= \mathcal{N}(t_n, \hat{u}_n)+\sum_{j=2}^{i-1} \Big(\sum_{k=1}^{\ell_{ij}}\dfrac{\alpha^{(k)}_{ij}}{c^k_i H^{k-1} (k-1)!}\tau^{k-1}\Big) \widehat{D}_{n,j},\\
  \label{eq:hatDni}
  \widehat{D}_{n,i} &= \mathcal{N}(t_n+c_i H, \widehat{U}_{n,i}) - \mathcal{N}(t_n, \widehat{u}_n ),
\end{align}
that provide the approximations
\[
  \widehat{U}_{n,i}\approx y_{n,i}(c_i H) \approx v_{n,i}(c_i H)=U_{n,i}.
\]
With these in place, we then solve the linear ODE
\begin{equation} \label{eq13nd}
  y'_{n}(\tau)=\mathcal{L}y_{n}(\tau) +   \hat{q}_{n}(\tau), \q  y_{n}(0)=\hat{u}_n
\end{equation}
over $\tau\in [0, H]$, with
\begin{equation} \label{eq:q_hat}
  \hat{q}_{n}(\tau) = \mathcal{N}(t_n, \hat{u}_n)+\sum_{i=2}^{s} \Big(\sum_{k=1}^{m_i}\dfrac{\beta^{(k)}_{i} }{H^{k-1}(k-1)!}\tau^{k-1} \Big) \widehat{D}_{n,i},
\end{equation}
to obtain the approximate time-step solutions,
\[
  \hat{u}_{n+1}\approx y_{n}(H) \approx v_{n}(H)=u_{n+1}.
\]

Since the above procedure uses a ``macro'' time step $H$ to integrate
the slow process, and a ``micro'' time step $h$ to integrate the fast
process (via solving the ODEs \eqref{eq13nc} and \eqref{eq13nd}), we
call the resulting methods \eqref{eq13nc}-\eqref{eq:hatDni}
\emph{Multirate Exponential Runge--Kutta (MERK)} methods.  By
construction, these MERK methods offer several interesting features.
They reduce the solution of nonlinear problems \eqref{eq1} to the
solution of a sequence of linear differential equations \eqref{eq13nc} and
\eqref{eq13nd}, using very few evaluations of the nonlinear operator
$\mathcal{N}$. Thus they can be more efficient for problems where the
linear part is much less costly to compute than the nonlinear part.
Additionally, they do not require the computation of matrix functions,
as is the case with ExpRK methods.  Moreover, these methods do not
require a starting value procedure as in multirate algorithms for
exponential multistep methods \cite{Demirel2015,HO11}.

We provide the following Algorithm~\ref{alg2} to give a succinct
overview of the implementation of our MERK methods.
\begin{algorithm}[h]
\caption{MERK method}
\label{alg2}
\begin{list}{$\bullet $}{}
\item \textbf{Input:}  $\mathcal{L}$; $\mathcal{N}(t,u)$; $t_0$; $u_0$; $s$; $c_i$ ($i=1,\ldots,s$); $H$
%, where $N$ is the number of sub-intervals.
%initial step size $H$ (for the case of constant step size, $h=(T-t_0)/N$, where $N$ is the number of sub-intervals).
\item \textbf{Initialization:}  Set $n=0$; $\hat{u}_n=u_0$.\\
While $t_n<T$
\begin{enumerate}
  \item Set $\widehat{U}_{n,1}=\hat{u}_n$.
\item For $i=2,\ldots,s$ do
\begin{enumerate}
  \item Find  $\hat{p}_{n,i}(\tau)$ as in \eqref{eq:p_hat}.
  \item Solve \eqref{eq13nc} on $[0, c_i H]$ to obtain $\widehat{U}_{n,i}\approx y_{n,i}(c_i H)$.
\end{enumerate}
 \item Find $\hat{q}_{n,s}(\tau)$ as in \eqref{eq:q_hat}
 %(for a step size control, find also $\widehat{\bar{q}}_{0,s}(\tau)$ as in \eqref{eq3.15d}).
  \item Solve \eqref{eq13nd} on $[0, H]$ to get $\hat{u}_{n+1}\approx y_{n}(H).$
  %\item {[}Step size control{]}  Solve \eqref{eq3.15e} on $[0, H]$  to get $\widehat{{\tt err}}_{1}$) and perform  $h:=h_{\text{new}}$.
  \item Update $t_{n+1}:=t_n+H$, $n:=n+1$.
\end{enumerate}
\item \textbf{Output:} Approximate values $\hat{u}_n\approx u_n, n=1,2,\ldots$ (where
$u_n$ is the numerical solution at time $t_n$ obtained by an ExpRK method).
\end{list}
\end{algorithm}
%Details on choosing solvers for integrating ODEs \eqref{eq13nc} and \eqref{eq13nd} will be discussed in Section~\ref{odes}.
%%-----------------------------------------------%%-----------------------------------------------
\subsection{Stability and convergence analysis}
\label{sec:analysis}
Since MERK methods are constructed to approximate ExpRK methods, we
perform their error analysis in the framework of analytic semigroups
on a Banach space $X$, under the following assumptions (see
e.g., \cite{HO05a,LO14b}).

{\em Assumption 1. The linear operator $\mathcal{L}$ is the
  infinitesimal generator of an analytic semigroup
  $\ee^{t\mathcal{L}}$ on $X$}. This implies that
\begin{equation} \label{eq:bound1}
\|\ee^{t\mathcal{L}}\|_{X\leftarrow X}\leq C, \quad t\geq 0
\end{equation}
and consequently $\varphi_k(H \mathcal{L})$,   $a_{ij}(H \mathcal{L})$
and $b_{i}(H \mathcal{L})$  are bounded operators. \\
%Furthermore, the following stability bound (see  \cite[Lemma 1]{HO05a})
%\begin{equation} \label{eq:bound2}
%\left \|h A \sum_{j=1}^{n}\ee^{j h A} \right\|_{X\leftarrow X}  \leq C
%\end{equation}
%holds uniformly for all $n\geq 1$ and $h>0$ with $0<nh\le T-t_0$.

{\em Assumption 2 (for high-order methods). The solution $u:[t_0,
  T]\to X$  of \eqref{eq1} is sufficiently smooth with derivatives in
  $X$, and $\mathcal{N}:[t_0, T]\times X \to X$ is sufficiently
  Fr\'echet differentiable in a strip along the exact solution.}  All
  derivatives occurring in the remainder of this section are therefore
  assumed to be uniformly bounded.\\

We analyze the error in MERK methods starting with the local error of
ExpRK methods. Therefore, we first consider \eqref{eq4} (in its
explicit form) with exact initial value, $u_n=u(t_n)$:
\begin{subequations} \label{eq3.17}
\begin{align}
\breve{U}_{n,i}&= \ee^{c_i  H \mathcal{L}} u(t_n) +H \sum_{j=1}^{i-1}a_{ij}(H \mathcal{L})\, \mathcal{N}(t_n+c_j H, \breve{U}_{n,j}), \q  i=2,\ldots,s,  \label{eq3.17a} \\
\breve{u}_{n+1} &= \ee^{H \mathcal{L}}u(t_n) +  H \sum_{i=1}^{s}b_{i}(H \mathcal{L})\, \mathcal{N}(t_n+c_i H, \breve{U}_{n,i})  \label{eq3.17b}
\end{align}
\end{subequations}
and thus the MERK methods  \eqref{eq14}--\eqref{eq13} are considered with polynomials
\begin{subequations} \label{eq3.18}
\begin{align}
 \breve{p}_{n,i}(\tau)&= \mathcal{N}(t_n, u(t_n))+\sum_{j=2}^{i-1} \Big(\sum_{k=1}^{\ell_{ij}}\dfrac{\alpha^{(k)}_{ij}}{c^k_i H^{k-1} (k-1)!}\tau^{k-1}\Big) \breve{D}_{n,j}, \label{eq3.18a} \\
 \breve{q}_{n}(\tau) &= \mathcal{N}(t_n, u(t_n))+\sum_{i=2}^{s} \Big(\sum_{k=1}^{m_i}\dfrac{\beta^{(k)}_{i} }{H^{k-1}(k-1)!}\tau^{k-1} \Big) \breve{D}_{n,i},  \label{eq3.18b}
\end{align}
\end{subequations}
where $\breve{D}_{n,i}= \mathcal{N}(t_n+c_i H, \breve{U}_{n,i})- \mathcal{N}(t_n, u(t_n) )$.\\

\noindent \textbf{Error notation.}
Since MERK methods consist of approximations to approximations, we
must clearly isolate the errors induced at each approximation level.
To this end, we let $\hat{e}_{n+1} = \hat{u}_{n+1} - u(t_{n+1})$ denote
the global error at time $t_{n+1}$ of a MERK method
\eqref{eq13nc}-\eqref{eq:hatDni}.  Let $\breve{e}_{n+1} =
\breve{u}_{n+1} - u(t_{n+1})$ denote the local error at $t_{n+1}$ of
the base ExpRK method.  Let
$\hat{\varepsilon}_{n,i}=\widehat{U}_{n,i}- y_{n,i}(c_i H)$ and
$\hat{\varepsilon}_{n+1} =\hat{u}_{n+1} -y_n (H)$ denote the (global)
errors of the ODE solvers when integrating \eqref{eq13nc} on $[0, c_i
H]$ and \eqref{eq13nd} on $[0, H]$ (note that
$\hat{\varepsilon}_{n,1}=\hat{u}_n - y_{n,i}(0)=0$ since $c_1=0$).

First, we may write total error as the sum of the errors in each approximation,
\begin{equation} \label{eq3.19}
  \hat{e}_{n+1} =
  % \hat{u}_{n+1} -y_n (H)+y_n (H) -\breve{u}_{n+1}+ \breve{u}_{n+1} - u(t_{n+1})
  \hat{\varepsilon}_{n+1}+ (y_n (H) -\breve{u}_{n+1})+ \breve{e}_{n+1}.
\end{equation}
Applying the variation-of-constants formula to
\eqref{eq:q_hat} and using \eqref{eq11b}, we then write
\begin{equation} \label{eq3.20}
  y_n(H)=\ee^{H \mathcal{L}}\hat{u}_n + H \sum_{i=1}^{s}b_{i}(H \mathcal{L})\, \mathcal{N}(t_n+c_i H, \widehat{U}_{n,i}).
\end{equation}
Inserting $y_n (H)$ and $\breve{u}_{n+1}$ from \eqref{eq3.20} and
\eqref{eq3.17b} into \eqref{eq3.19} gives
\begin{equation} \label{eq3.21}
  \hat{e}_{n+1} = \ee^{H \mathcal{L}}\hat{e}_n +\hat{\varepsilon}_{n+1}+  H \mathcal{S}_{n,s} +  \breve{e}_{n+1},
\end{equation}
where
\begin{equation} \label{eq3.22}
  \mathcal{S}_{n,s} =\sum_{i=1}^{s}b_{i}(H \mathcal{L})\big(\mathcal{N}(t_n+c_i H, \widehat{U}_{n,i}) -\mathcal{N}(t_n+c_i H, \breve{U}_{n,i}) \big).
\end{equation}
Next, we prove some preliminary results.
%%%----LEMMA 3.2------------
\begin{lemma} \label{lemma3.2}
Denoting  $\widehat{E}_{n,i}=\widehat{U}_{n,i}-\breve{U}_{n,i}$
and $\breve{N}_{n,i}=\frac{\partial \mathcal{N}}{\partial u}(t_n
+ c_i H, \breve{U}_{n,i})$,   we have
\begin{equation} \label{eq3.23}
  \widehat{E}_{n,i}=\hat{\varepsilon}_{n,i}+  \ee^{c_i  H \mathcal{L}} \hat{e}_n + H \sum_{j=1}^{i-1}a_{ij}(H\mathcal{L}) (\breve{N}_{n,j} \widehat{E}_{n,j} + \widehat{R}_{n,j})
\end{equation}
with
\begin{equation} \label{eq3.23a}
  \widehat{R}_{n,j} =\int_{0}^{1} (1-\theta ) \frac{\partial^2  \mathcal{N}}{\partial u^2}(t_n + c_j H, \breve{U}_{n,j} + \theta \widehat{E}_{n,j})(\widehat{E}_{n,j}, \widehat{E}_{n,j})\dd\theta.
\end{equation}
Furthermore, under Assumption~2, the bound
\begin{equation} \label{eq3.24}
  \|\widehat{R}_{n,j}\|\leqslant C \|\widehat{E}_{n,j}\|^2, \ \text{i.e.},  \ \widehat{R}_{n,j}=\mathcal{O}(\|\widehat{E}_{n,j}\|^2)
\end{equation}
is held as long as $\widehat{E}_{n,j}$ remains in a sufficiently small
neighborhood of $0$.
\end{lemma}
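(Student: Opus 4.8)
The plan is to derive the recursion \eqref{eq3.23} by writing both $\widehat{U}_{n,i}$ and $\breve{U}_{n,i}$ as variation-of-constants representations and subtracting, then converting the resulting nonlinearity differences into their first-order Taylor form. First I would use the solver-error definition $\hat{\varepsilon}_{n,i}=\widehat{U}_{n,i}-y_{n,i}(c_iH)$ to write $\widehat{U}_{n,i}=\hat{\varepsilon}_{n,i}+y_{n,i}(c_iH)$. Applying the variation-of-constants formula to the perturbed linear ODE \eqref{eq13nc} exactly as in the proof of Theorem~\ref{theorem1}, but now with initial value $\hat{u}_n$ and the numerically formed data $\widehat{D}_{n,j}$, and invoking the coefficient identity \eqref{eq11a}, reassembles $y_{n,i}(c_iH)$ into the ExpRK stage form
\begin{equation*}
  y_{n,i}(c_iH)=\ee^{c_iH\mathcal{L}}\hat{u}_n+H\sum_{j=1}^{i-1}a_{ij}(H\mathcal{L})\,\mathcal{N}(t_n+c_jH,\widehat{U}_{n,j}),
\end{equation*}
which is the internal-stage analogue of the final-step identity \eqref{eq3.20}.

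Next I would subtract the exact-initial-value ExpRK stage \eqref{eq3.17a}. The two exponential terms combine into $\ee^{c_iH\mathcal{L}}(\hat{u}_n-u(t_n))=\ee^{c_iH\mathcal{L}}\hat{e}_n$, and the two quadrature sums combine into $H\sum_{j=1}^{i-1}a_{ij}(H\mathcal{L})\big(\mathcal{N}(t_n+c_jH,\widehat{U}_{n,j})-\mathcal{N}(t_n+c_jH,\breve{U}_{n,j})\big)$, while the solver error $\hat{\varepsilon}_{n,i}$ is carried along unchanged. To convert each nonlinearity difference into the form appearing in \eqref{eq3.23}, I would apply Taylor's theorem with integral remainder to $\mathcal{N}(t_n+c_jH,\cdot)$ about $\breve{U}_{n,j}$ in the direction $\widehat{E}_{n,j}=\widehat{U}_{n,j}-\breve{U}_{n,j}$: the linear term yields $\breve{N}_{n,j}\widehat{E}_{n,j}$ and the second-order remainder is precisely $\widehat{R}_{n,j}$ as written in \eqref{eq3.23a}. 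Assembling these pieces produces \eqref{eq3.23}.

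Finally, for the bound \eqref{eq3.24} I would estimate $\widehat{R}_{n,j}$ directly from its integral representation. Since the argument $\breve{U}_{n,j}+\theta\widehat{E}_{n,j}$ lies on the segment joining $\breve{U}_{n,j}$ and $\widehat{U}_{n,j}$, as long as $\widehat{E}_{n,j}$ is small this segment stays inside the strip along the exact solution where Assumption~2 guarantees that $\tfrac{\partial^2\mathcal{N}}{\partial u^2}$ is uniformly bounded. Pulling this uniform bound out of the integral and using $\int_0^1(1-\theta)\dd\theta=\tfrac12$ then gives $\|\widehat{R}_{n,j}\|\le C\|\widehat{E}_{n,j}\|^2$, absorbing the factor $\tfrac12$ into $C$.

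The main obstacle here is bookkeeping rather than conceptual: one must scrupulously distinguish the ``hatted'' MERK quantities, which are built on the perturbed initial value $\hat{u}_n$ and on approximate stage data $\widehat{D}_{n,j}$, from the ``breve'' ExpRK quantities, which are built on the exact value $u(t_n)$, so that the difference of initial data condenses to the single clean term $\ee^{c_iH\mathcal{L}}\hat{e}_n$ and so that the solver error $\hat{\varepsilon}_{n,i}$ is correctly isolated from the propagated error and the nonlinearity-difference contributions. The only genuine subtlety is the qualifier ``as long as $\widehat{E}_{n,j}$ remains in a sufficiently small neighborhood of $0$,'' which is exactly what licenses the use of the strip bound on the second derivative; this smallness is not proved here but will be secured inductively once the global error estimates of the following analysis are in place.
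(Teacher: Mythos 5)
Your proposal is correct and follows essentially the same route as the paper's proof: the decomposition $\widehat{E}_{n,i}=\hat{\varepsilon}_{n,i}+(y_{n,i}(c_iH)-\breve{U}_{n,i})$, the variation-of-constants representation of $y_{n,i}(c_iH)$ reassembled via \eqref{eq11a} into the ExpRK stage form, subtraction of \eqref{eq3.17a}, and a Taylor expansion with integral remainder yielding $\breve{N}_{n,j}\widehat{E}_{n,j}+\widehat{R}_{n,j}$ with the bound following from Assumption~2. No gaps.
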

\begin{proof}
We first rewrite
\begin{equation} \label{eq3.25}
  \widehat{E}_{n,i}=\widehat{U}_{n,i}- y_{n,i}(c_i H) + (y_{n,i}(c_i H) -\breve{U}_{n,i})=\hat{\varepsilon}_{n,i}+(y_{n,i}(c_i H) -\breve{U}_{n,i}).
\end{equation}
Here $y_{n,i}(c_i H)$ is the exact solution of  \eqref{eq13nc}, which
can be represented by the variation-of-constants formula and then
rewritten by using \eqref{eq11a} and \eqref{eq:p_hat} as:
\begin{equation} \label{eq3.26}
  y_{n,i}(c_i H)=\ee^{c_i  H \mathcal{L}} \hat{u}_n + H \sum_{j=1}^{i-1}a_{ij}(H \mathcal{L})\mathcal{N}(t_n+c_j H, \widehat{U}_{n,j}).
\end{equation}
Subtracting \eqref{eq3.17a} from  \eqref{eq3.26} and inserting the
obtained result into \eqref{eq3.25} gives
\begin{equation} \label{eq3.27}
  \widehat{E}_{n,i}=\hat{\varepsilon}_{n,i}+ \ee^{c_i  H \mathcal{L}} \hat{e}_n + H \sum_{j=1}^{i-1}a_{ij}(H \mathcal{L}) \big(\mathcal{N}(t_n+c_j H, \widehat{U}_{n,j}) -\mathcal{N}(t_n+c_j H, \breve{U}_{n,j}) \big).
\end{equation}
Using the Taylor series expansion of $\mathcal{N}(t, u)$ at $(t_n +c_j H, \breve{U}_{n,j})$, we get
\begin{equation} \label{eq3.28}
  \mathcal{N}(t_n+c_j H, \widehat{U}_{n,j}) -\mathcal{N}(t_n+c_j H, \breve{U}_{n,j})=\breve{N}_{n,j} \widehat{E}_{n,j} + \widehat{R}_{n,j}
\end{equation}
with the remainder  $\widehat{R}_{n,j}$ given in \eqref{eq3.23a},
which clearly satisfies \eqref{eq3.24} due to Assumption~2.
Inserting \eqref{eq3.28} into \eqref{eq3.27} shows \eqref{eq3.23}.
\end{proof}
%%---------
%%%----LEMMA 3.2------------
\begin{lemma} \label{lemma3.3}
Under Assumptions 1 and 2,  there exist bounded operators
$\mathcal{T}_{n,i}(\hat{\varepsilon}_{n,i})$ and $\mathcal{B}_{n}
(\hat{e}_n)$ on $X$ such that
\begin{equation} \label{eq:Sni}
  \mathcal{S}_{n,s} =\sum_{i=2}^{s}\big(b_{i}(H \mathcal{L})\breve{N}_{n,i}+ H\mathcal{T}_{n,i}(\hat{\varepsilon}_{n,i}) \big)\hat{\varepsilon}_{n,i}+ \mathcal{B}_{n} (\hat{e}_n)\hat{e}_n.
\end{equation}
Note that  $\mathcal{T}_{n,i}$ also depends on $H$,
$\hat{\varepsilon}_{n,j}$, $a_{ij}(H \mathcal{L}) $,
$\breve{N}_{n,j}$ ($j=2,\ldots,i-1$), and $\hat{e}_{n}$; and
$\mathcal{B}_{n}$ also depends on $H$, $b_{i}(H \mathcal{L}), a_{ij}(H
\mathcal{L}) $,  $c_i$, and $\breve{N}_{n,i}$.
\end{lemma}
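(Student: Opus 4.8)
The plan is to reduce $\mathcal{S}_{n,s}$ to the stated form by (i) linearizing each nonlinear difference with the same Taylor expansion used in Lemma~\ref{lemma3.2}, (ii) solving the recursion \eqref{eq3.23} for the stage errors $\widehat{E}_{n,i}$, and (iii) regrouping the resulting terms according to whether they are driven by $\hat{\varepsilon}_{n,i}$ or by $\hat{e}_n$. First I would apply \eqref{eq3.28} to each summand in the definition \eqref{eq3.22}, writing $\mathcal{N}(t_n+c_iH,\widehat{U}_{n,i})-\mathcal{N}(t_n+c_iH,\breve{U}_{n,i}) = \breve{N}_{n,i}\widehat{E}_{n,i}+\widehat{R}_{n,i}$ with $\widehat{R}_{n,i}=\mathcal{O}(\|\widehat{E}_{n,i}\|^2)$ by \eqref{eq3.24}. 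For the index $i=1$ one has $c_1=0$, $\widehat{U}_{n,1}=\hat{u}_n$ and $\breve{U}_{n,1}=u(t_n)$, so $\widehat{E}_{n,1}=\hat{e}_n$ and $\hat{\varepsilon}_{n,1}=0$; this term therefore contributes only to the $\hat{e}_n$ part. At this stage $\mathcal{S}_{n,s}=\sum_{i=1}^{s}b_i(H\mathcal{L})(\breve{N}_{n,i}\widehat{E}_{n,i}+\widehat{R}_{n,i})$.

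Second, I would unfold the recursion \eqref{eq3.23} by induction on $i$ to obtain, for each stage, a representation of the form $\widehat{E}_{n,i}=\hat{\varepsilon}_{n,i}+A_{n,i}\hat{e}_n + H\sum_{j=2}^{i-1}B_{n,ij}\hat{\varepsilon}_{n,j}+(\text{quadratic remainder})$, where $A_{n,i}$ and $B_{n,ij}$ are bounded operators built from $\ee^{c_iH\mathcal{L}}$, the coefficients $a_{ij}(H\mathcal{L})$, and the Jacobians $\breve{N}_{n,j}$. Boundedness of these operators follows from Assumption~1 (which, through \eqref{eq:bound1}, makes the $\varphi_k$ and hence $a_{ij}(H\mathcal{L})$, $b_i(H\mathcal{L})$ bounded) and Assumption~2 (which bounds $\breve{N}_{n,j}$ and the Hessians in \eqref{eq3.23a}). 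The crucial structural feature emerging from this unfolding is that every contribution other than the diagonal term $\hat{\varepsilon}_{n,i}$ itself carries either an explicit factor $H$ (from the $H\sum_j a_{ij}$ in \eqref{eq3.23}) or is quadratic in the errors.

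Third, I would substitute this representation into the reduced expression for $\mathcal{S}_{n,s}$ and collect terms. The diagonal pieces $b_i(H\mathcal{L})\breve{N}_{n,i}\hat{\varepsilon}_{n,i}$ give exactly the leading coefficient in \eqref{eq:Sni}. Every remaining appearance of $\hat{\varepsilon}_{n,i}$ arrives either through the $H$-weighted sums of the recursion or through a quadratic remainder $\widehat{R}_{n,j}$; using \eqref{eq3.24} to factor one power of the error out of each bilinear remainder, all such contributions may be written as $H$ times a bounded operator acting on $\hat{\varepsilon}_{n,i}$, which defines $\mathcal{T}_{n,i}(\hat{\varepsilon}_{n,i})$ together with its stated dependence on $H$, $a_{ij}(H\mathcal{L})$, $\breve{N}_{n,j}$, and the lower-index errors. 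Likewise, every term proportional to $\hat{e}_n$—the $i=1$ summand, the $A_{n,i}\hat{e}_n$ pieces, and the $\hat{e}_n$-driven parts of the remainders—can be gathered and written as $\mathcal{B}_n(\hat{e}_n)\hat{e}_n$ with a bounded, error-dependent operator $\mathcal{B}_n$.

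I expect the main obstacle to be the bookkeeping in the third step rather than any single estimate: one must verify that after fully unfolding the recursion, every off-diagonal and every remainder contribution demonstrably carries the factor $H$ needed for the $H\mathcal{T}_{n,i}$ term, and that the bilinear remainders \eqref{eq3.23a} can be consistently factored as (bounded, error-dependent operator)$\times$(single error) so as to be distributed between $\mathcal{T}_{n,i}$ and $\mathcal{B}_n$. The allowance that $\mathcal{T}_{n,i}$ and $\mathcal{B}_n$ may themselves depend on the errors (as indicated by the notation $\mathcal{T}_{n,i}(\hat{\varepsilon}_{n,i})$ and $\mathcal{B}_n(\hat{e}_n)$) is precisely what makes this absorption of the quadratic remainders legitimate, so the argument reduces to confirming that all operators involved remain uniformly bounded in a neighborhood of $0$, which holds by Assumptions~1 and~2.
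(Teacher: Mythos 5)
Your proposal follows essentially the same route as the paper's proof: insert the Taylor expansion \eqref{eq3.28} into \eqref{eq3.22} to get $\mathcal{S}_{n,s}=\sum_{i=1}^{s}b_i(H\mathcal{L})\bigl(\breve{N}_{n,i}\widehat{E}_{n,i}+\widehat{R}_{n,i}\bigr)$, unfold the recursion \eqref{eq3.23} by induction (the paper's \eqref{eq3.30}), and regroup the resulting terms into the $\hat{\varepsilon}_{n,i}$-driven and $\hat{e}_n$-driven parts, with boundedness of $\mathcal{T}_{n,i}$ and $\mathcal{B}_n$ following from Assumptions 1 and 2. The argument is correct and, if anything, more explicit than the paper about why the quadratic remainders can be absorbed into the error-dependent operators.
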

%%---------
\begin{proof}
Inserting \eqref{eq3.28}  (with $i$ in place of $j$) into \eqref{eq3.22} gives
\begin{equation} \label{eq3.29}
\mathcal{S}_{n,s}  =\sum_{i=1}^{s}b_{i}(H \mathcal{L})\big(\breve{N}_{n,i} \widehat{E}_{n,i} + \widehat{R}_{n,i}\big).
\end{equation}
Using the recursion \eqref{eq3.23} from Lemma~\ref{lemma3.2}, we
further expand $\widehat{E}_{n,i}$ as
\begin{equation} \label{eq3.30}
\begin{aligned}
  \widehat{E}_{n,i}=\hat{\varepsilon}_{n,i} &+  H \sum_{j=1}^{i-1}a_{ij}(H\mathcal{L}) \breve{N}_{n,j} \hat{\varepsilon}_{n,j} +  H^2 \sum_{j=1}^{i-1}a_{ij}(H\mathcal{L}) \breve{N}_{n,j}  \sum_{k=1}^{j-1}a_{jk}(H\mathcal{L}) \breve{N}_{n,k} \widehat{E}_{n,k} \\
  &+H\sum_{j=1}^{i-1}a_{ij}(H \mathcal{L}) \widehat{R}_{n,j}  + H^2 \sum_{j=1}^{i-1}a_{ij}(H\mathcal{L}) \breve{N}_{n,j}  \sum_{k=1}^{j-1}a_{jk}(H\mathcal{L}) \widehat{R}_{n,k} \\
  \\
  &+ \Big( \ee^{c_i  H \mathcal{L}} + H \sum_{j=1}^{i-1}a_{ij}(H\mathcal{L}) \breve{N}_{n,j} \ee^{c_j  H \mathcal{L}} \Big)\hat{e}_{n}.
\end{aligned}
\end{equation}
Using \eqref{eq3.23} and \eqref{eq3.24}
($\widehat{R}_{n,i}=\mathcal{O}(\|\widehat{E}_{n,i}\|^2$) and
proceeding by induction, one can complete the recursion \eqref{eq3.30}
for $\widehat{E}_{n,i}$.  Inserting this recursion into \eqref{eq3.29}
(and noting that $\hat{\varepsilon}_{n,1}=0$) yields
\eqref{eq:Sni}. Based on the structure of \eqref{eq3.30} and
\eqref{eq3.29}, under the given assumptions it is clear that the
boundedness of $\mathcal{T}_{n,i}(\hat{\varepsilon}_{n,i})$ and
$\mathcal{B}_{n} (\hat{e}_n)$ follow from the boundedness of
$a_{ij}(H \mathcal{L}), b_{i}(H \mathcal{L})$, $\breve{N}_{n,i}$, and
$\widehat{R}_{n,i}$.
\end{proof}
We now present the main convergence result for MERK methods.
%%--THEOREM 3.4--------
\begin{theorem}\label{theorem2}
Let the initial value problem \eqref{eq1} satisfy Assumptions
1--2.  Consider for its numerical solution a MERK method
\eqref{eq13nc}--\eqref{eq:hatDni} that is constructed from an ExpRK
method of global order $p$.
We further assume that the ``fast'' ODEs \eqref{eq13nc} and \eqref{eq13nd}
associated with the MERK method are integrated with micro time step
$h=H/m$ by using ODE solvers that have global order of convergence $q$ and
$r$, respectively, and where $m$ is the number of fast steps per slow
step. Then, the MERK method is convergent, and has error bound
\begin{equation}\label{eq3.31}
  \| u_n -u(t_n) \| \leq C_1 H^p + C_2 Hh^q + C_3h^r
%\frac{n C_2}{m^{\text{min}(q,r)}}  H^{\text{max}(q, r)}
\end{equation}
% \rem{Luan: should we change the last term to $C_3 h^r$ as this comes naturally from \eqref{eq3.35} without using $m$? In fact, $nCc_3 H h^r =nH Cc_3 h^r \leq T C c_3 h^r= C_3 h^r$}
on compact time intervals \ $t_0 \leq  t_n =t_0+nH \leq  T$. Here,
the constant $C_1$ depends on $T-t_0$, but is independent of $n$ and
$H$; and the constants $C_2$ and $C_3$ also depend on the error
constants of the choosen ODE solvers.
\end{theorem}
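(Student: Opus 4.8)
The plan is to convert the one-step error identity \eqref{eq3.21} into a perturbed linear recursion for the global error $\hat e_n$ and then close it with a discrete Gronwall argument. First I would substitute the representation of $\mathcal{S}_{n,s}$ from Lemma~\ref{lemma3.3} (namely \eqref{eq:Sni}) into \eqref{eq3.21}, collecting the terms proportional to $\hat e_n$ into the propagator. This yields a recursion of the form
\begin{equation*}
  \hat e_{n+1} = \big(\ee^{H\mathcal{L}} + H\mathcal{B}_n(\hat e_n)\big)\hat e_n + g_n,
\end{equation*}
where the inhomogeneity collects the three distinct error sources,
\begin{equation*}
  g_n = \breve e_{n+1} + \hat\varepsilon_{n+1} + H\sum_{i=2}^{s}\big(b_i(H\mathcal{L})\breve N_{n,i} + H\mathcal{T}_{n,i}(\hat\varepsilon_{n,i})\big)\hat\varepsilon_{n,i}.
\end{equation*}

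Next I would estimate the three contributions to $g_n$ separately. The term $\breve e_{n+1}$ is the local error of the base ExpRK scheme, whose order-$p$ construction ensures $\|\breve e_{n+1}\| = \mathcal{O}(H^{p+1})$. The quantities $\hat\varepsilon_{n,i}$ and $\hat\varepsilon_{n+1}$ are the global errors of the fast ODE solvers on the short intervals $[0,c_iH]$ and $[0,H]$; since a solver of global order $q$ (respectively $r$) integrating over an interval of length $\mathcal{O}(H)$ with step $h$ accumulates an error of size (interval length)$\,\times h^{\text{order}}$, we obtain $\|\hat\varepsilon_{n,i}\| = \mathcal{O}(Hh^q)$ and $\|\hat\varepsilon_{n+1}\| = \mathcal{O}(Hh^r)$. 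Using Assumption~1 to bound $b_i(H\mathcal{L})$, $\breve N_{n,i}$, and $\mathcal{T}_{n,i}$ uniformly, the stage sum contributes $\mathcal{O}(H^2h^q)$, so altogether $\|g_n\| = \mathcal{O}(H^{p+1}) + \mathcal{O}(H^2h^q) + \mathcal{O}(Hh^r)$.

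I would then unfold the recursion from the exact initial value $\hat e_0 = 0$, writing
\begin{equation*}
  \hat e_n = \sum_{k=0}^{n-1}\ee^{(n-1-k)H\mathcal{L}}\big(H\mathcal{B}_k(\hat e_k)\hat e_k + g_k\big),
\end{equation*}
and taking norms. The semigroup bound \eqref{eq:bound1} gives $\|\ee^{(n-1-k)H\mathcal{L}}\| \leq C$ uniformly, so the pure exponential factors telescope and no spurious exponential growth in $n$ appears; the $H\mathcal{B}_k$ terms are then absorbed by the discrete Gronwall lemma, yielding $\|\hat e_n\| \leq C\,\ee^{C(T-t_0)}\sum_{k=0}^{n-1}\|g_k\|$. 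Bounding the sum by $n\max_k\|g_k\|$ and using $nH \leq T-t_0$ converts the three pieces of $\|g_k\|$ into the claimed bound, since $n\,H^{p+1} = \mathcal{O}(H^p)$, $\ n\,H^2h^q = \mathcal{O}(Hh^q)$, and $n\,Hh^r = \mathcal{O}(h^r)$, which are exactly the $C_1H^p$, $C_2Hh^q$, and $C_3h^r$ terms of \eqref{eq3.31}.

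The main obstacle I anticipate is twofold. First, the bookkeeping of $H$-powers is what makes the result sharp: it is precisely the extra factor $H$ multiplying $\mathcal{S}_{n,s}$ in \eqref{eq3.21}, combined with the interval-length scaling of the stage-solver errors, that lets the internal stages tolerate one order less ($q = p-1$) than the final solve ($r = p$) while retaining overall order $p$, so getting these factors right is the crux. Second, because $\mathcal{B}_n$ and $\mathcal{T}_{n,i}$ depend on $\hat e_n$ and $\hat\varepsilon_{n,i}$ through the quadratic remainders $\widehat R_{n,j}$, the recursion is only nominally linear, and the boundedness asserted in Lemma~\ref{lemma3.3} relies on $\widehat E_{n,j}$ remaining in a small neighborhood of $0$. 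I would therefore run the Gronwall estimate as an induction on $n$, carrying the hypothesis that $\|\hat e_n\|$ stays small enough for the remainder bound \eqref{eq3.24} to remain valid, so that the constants appearing in Lemma~\ref{lemma3.3} are genuinely uniform across the time steps.
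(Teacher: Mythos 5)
Your proposal is correct and follows essentially the same route as the paper: insert the representation of $\mathcal{S}_{n,s}$ from Lemma~\ref{lemma3.3} into \eqref{eq3.21}, bound the fast-solver errors by interval length times $h^{q}$ (resp.\ $h^{r}$) so that $\hat\varepsilon_{n,i}=\mathcal{O}(Hh^q)$ and $\hat\varepsilon_{n+1}=\mathcal{O}(Hh^r)$, unfold the recursion from $\hat e_0=0$ using the semigroup bound \eqref{eq:bound1}, and close with a discrete Gronwall lemma, with the same power-of-$H$ bookkeeping yielding the three terms of \eqref{eq3.31}. Your explicit remark about running the argument as an induction so that $\widehat E_{n,j}$ stays in the neighborhood where \eqref{eq3.24} holds is a point the paper leaves implicit, but it is not a departure from its method.
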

%%---------
\begin{proof}
We first note that since we only employ the fast ODE solvers on
time intervals $[0,c_iH]$ and $[0,H]$, then our assumption regarding
their accuracies of order $q$ and $r$ is typically
equivalent to \cite[Thm.~3.6]{hairer93}
\begin{subequations} \label{fastError}
  \begin{align}
    \label{fastError.a}
    \hat{\varepsilon}_{n,i}
    &= \frac{\tilde{c}_2}{\Lambda_i} h^q \left(\ee^{\Lambda_i c_i H}-1\right)
      %= \tilde{c}_2 H h^q \left(1+\Lambda_i c_iH \ee^{\eta_i}-1\right)
      = \tilde{c}_2 c_i \varphi_1 (\Lambda_i c_i H) H h^q
      = c_2 H h^q,\\
    \label{fastError.b}
    \hat{\varepsilon}_n
    &= \frac{\tilde{c}_3}{\Lambda} h^r \left(\ee^{\Lambda H}-1\right)
      %= \frac{\tilde{c}_3}{\Lambda} h^r \left(1+\Lambda H \ee^{\eta}-1\right)
       = \tilde{c}_3 \varphi_1 (\Lambda H) H h^r
      = c_3 H h^r,
  \end{align}
\end{subequations}
(due to \eqref{eq:varphi1}) where
%$0< \eta_i< \Lambda_i c_i H$, $0 < \eta< \Lambda H$, and
$\Lambda_i,\Lambda$ are the Lipschitz constants for the increment
functions of the ODE solvers applied to the problems \eqref{eq13nc}
and \eqref{eq13nd}, respectively.

For simplicity of notation, we denote $B_{n,i}=b_{i}(H \mathcal{L})\breve{N}_{n,i}+
H\mathcal{T}_{n,i}(\hat{\varepsilon}_{n,i})$. \\
Clearly, $B_{n,i}$ is a bounded operator and thus \eqref{eq:Sni} becomes
\begin{equation} \label{eq3.32}
  \mathcal{S}_{n,s} =\sum_{i=2}^{s} B_{n,i}\hat{\varepsilon}_{n,i}+ \mathcal{B}_{n} (\hat{e}_n)\hat{e}_n.
\end{equation}
Inserting this into \eqref{eq3.21} gives
\begin{equation} \label{eq3.33}
  \hat{e}_{n+1} = \ee^{H \mathcal{L}}\hat{e}_n + H\mathcal{B}_{n} (\hat{e}_n)\hat{e}_n +\breve{e}_{n+1}+ H\left(\sum_{i=2}^{s} B_{n,i}\hat{\varepsilon}_{n,i}+ \hat{\varepsilon}_{n+1}\right).
\end{equation}
Solving recursion \eqref{eq3.33} and using $\hat{e}_0=0$ (since
$\hat{u}_0 = u_0= u(t_0)$) finally yields
\begin{equation} \label{eq3.34}
  \hat{e}_{n}=H\sum_{j=0}^{n-1} \ee^{(n-1-j)H \mathcal{L}} \mathcal{B}_j (\hat{e}_j)\hat{e}_j + \sum_{j=0}^{n-1} \ee^{jH \mathcal{L}}\Big(\breve{e}_{n-j} + H\sum_{i=2}^{s} B_{n-1-j,i}\hat{\varepsilon}_{n-1-j,i}+\hat{\varepsilon}_{n-j}\Big)
\end{equation}
Since the ExpRK method has global order $p$, we have the local
error $\breve{e}_{n-j}=\mathcal{O}(H^{p+1})$, and from
\eqref{fastError} we have
$\hat{\varepsilon}_{n-1-j,i}=\mathcal{O}(Hh^q)$,
$\hat{\varepsilon}_{n-j}=\mathcal{O}(Hh^r)$.
Using \eqref{eq:bound1} we derive from \eqref{eq3.34} that
\begin{equation} \label{eq3.35}
  \|\hat{e}_{n}\| \leq H\sum_{j=0}^{n-1}C \|\hat{e}_j \| +
  \sum_{j=0}^{n-1} C\big(c_1 H^{p+1} +c_2 H^2 h^{q} + c_3 H h^r \big).
\end{equation}
An application of a discrete Gronwall lemma to \eqref{eq3.35} results
in the bound \eqref{eq3.31}.
\end{proof}
\begin{remark}
Since $h=H/m$, Theorem~\ref{theorem2} implies that for a MERK method
\eqref{eq13nc}--\eqref{eq:hatDni} to converge with order $p$, the inner
ODE solvers for \eqref{eq13nc} and \eqref{eq13nd} must have orders $q
\ge p-1$ and $r\ge p$, respectively.
\end{remark}
\section{Derivation of MERK methods}
\label{section4}
Based on the theory presented in Section~\ref{section3}, we now derive
MERK schemes up to order 5, relying heavily on ExpRK schemes that fit
the assumption of Theorem~\ref{theorem1}.  As we are interested in
problems with significant time scale separation $H\gg h$, we primarily
focus on stiffly-accurate ExpRK schemes.  Since MERK methods involve
linear ODEs \eqref{eq13nc} and \eqref{eq13nd} with a fixed coefficient
matrix $\mathcal{L}$ for the fast portion, they are characterized by
the polynomials defined in \eqref{eq:p_hat} and
\eqref{eq:q_hat}. Therefore, when deriving MERK schemes we display 
only their corresponding polynomials $\hat{p}_{n,i}(\tau)$ and
$\hat{q}_n(\tau)$.
%%-----------------------------------------------
\subsection{Second-order methods}
When searching for stiffly-accurate second-order ExpRK methods, we find the following scheme that uses $s=2$
stages (see \cite[Sect.~5.1]{HO05b}) and satisfies
Theorem~\ref{theorem1}: 
\begin{equation} \label{eq:expRK2}
\begin{aligned}
  U_{n,2}&= u_n + c_2 H \varphi _{1} ( c_2 H\mathcal{L})F(t_n, u_n)  \\
  u_{n+1}& = u_n + H \varphi _{1} ( H\mathcal{L})F(t_n, u_n) + h \tfrac{1}{c_2} \varphi _{2} ( H\mathcal{L})D_{n,2}. 
\end{aligned}
\end{equation}
From this, using the conclusion of Theorem~\ref{theorem1}, we derive
the corresponding family of second-order MERK methods, which we call \texttt{MERK2}:
\begin{equation} \label{eq:MERK2}
\begin{aligned}
\hat{p}_{n,2}(\tau)&=  \mathcal{N}(t_n, \hat{u}_n),   \hspace{2cm} \tau \in [0, c_2 H] \\
\hat{q}_{n}(\tau) &=  \mathcal{N}(t_n, \hat{u}_n)+\tfrac{\tau}{c_2 H} \widehat{D}_{n,2}, \q \  \tau \in [0, H].
\end{aligned}
\end{equation}
Since we do not use this scheme in our numerical experiments, we do
not specify a value for $c_2$.  We note that for these methods, the
fast time scale must be evolved a duration of $(1+c_2)H$ for each slow
time step.
%%-----------------------------------------------
\subsection{Third-order methods}
Also from \cite[Sect.~5.2]{HO05b} we consider the following family of
third-order, three-stage, ExpRK methods that satisfy Theorem~\ref{theorem1}:
\begin{equation} \label{eq:expRK3}
\begin{aligned}
  U_{n,2}&= u_n + c_2 H \varphi _{1} ( c_2 H\mathcal{L})F(t_n, u_n)  \\
  U_{n,3}&= u_n + \tfrac{2}{3}H \varphi _{1} ( \tfrac{2}{3} H\mathcal{L})F(t_n, u_n)+\tfrac{4}{9 c_2}  \varphi_{2} ( \tfrac{2}{3} H\mathcal{L})D_{n,2},  \\
  u_{n+1}& = u_n + H \varphi _{1} ( H\mathcal{L})F(t_n, u_n) + h \tfrac{3}{2} \varphi _{2} ( H\mathcal{L})D_{n,3}. 
\end{aligned}
\end{equation}
From these, we construct the following third-order \texttt{MERK3} scheme:
\begin{equation} \label{eq:MERK3}
\begin{aligned}
\hat{p}_{n2}(\tau)&=  \mathcal{N}(t_n, \hat{u}_n), \hspace{2cm} \tau \in [0, c_2 H] \\
\hat{p}_{n3}(\tau)&=  \mathcal{N}(t_n, \hat{u}_n)+ \tfrac{\tau}{c_2 H}\widehat{D}_{n,2}, \q \tau \in [0, \tfrac{2}{3} H] \\
\hat{q}_{n}(\tau) &= \mathcal{N}(t_n, \hat{u}_n) + \tfrac{3\tau}{2 H} \widehat{D}_{n,3},  \q  \ \tau \in [0, H].
\end{aligned}
\end{equation}
In our numerical experiments with this scheme, we choose
$c_2=\tfrac{1}{2}$.  Hence, the fast time scale must be evolved a
duration of $\frac{13}{6}H$ for each slow time step.
%Other three-stage ETD methods can be found in the literature. For instances, a method called ETD3RK was constructed in \cite{CM02} or another one called ETD2CF3 was given in \cite{CMO03}. For these methods, we can use the same way to compute polynomials in  \eqref{eq13n} and \eqref{eq3.15d}. We omit details.   
%%-----------------------------------------------
\subsection{Fourth-order methods}
To the best of our knowledge, the only 5 stage, stiffly-accurate
ExpRK method of order four was given in \cite[Sect. 5.3]{HO05b}.
However, this scheme does not satisfy Theorem~\ref{theorem1} due to
the coefficient
\[
  a_{52}(H\mathcal{L})=\tfrac{1}{2}\varphi_{2}(c_5 H\mathcal{L})-\varphi_{3}(c_4 H\mathcal{L})+ \tfrac{1}{4}\varphi
  _{2}(c_4 H\mathcal{L})-\tfrac{1}{2}\varphi_{3}(c_5 H\mathcal{L}),
\]
which is not a linear combination of $\{\varphi_{k}(c_5
H\mathcal{L})\}_{k=1}^5$.  Therefore, we cannot use it to derive a
fourth-order MERK scheme.  However, in a very recent submitted paper
\cite{Luan19}, we have derived a family of fourth-order, 6-stage,
stiffly-accurate ExpRK methods (named \texttt{expRK4s6}), that
additionally fulfill Theorem~\ref{theorem1}:
\begin{equation}\label{eq:expRK4}
  \begin{aligned}
    U_{n,2} = u_n &+\varphi_1 (c_2 H\mathcal{L})  c_2 HF(t_n, u_n), \\
    U_{n,k} = u_n &+ \varphi_1 (c_k H\mathcal{L}) c_k HF(t_n, u_n)+ \varphi_2 (c_k H\mathcal{L}) \tfrac{c^2_k}{c_2} H D_{n,2},  \q  \hspace{1cm} k=3, 4 \\
    U_{n,j} = u_n &+ \varphi_1 (c_j H\mathcal{L})c_j h F(t_n, u_n)+ \varphi_{2} (c_j H\mathcal{L}) \tfrac{c^2_j}{c_3-c_4} H \big(\tfrac{-c_4}{c_3}D_{n,3} +\tfrac{c_3}{c_4}D_{n,4}\big)\\
    &+  \varphi_{3} (c_j H\mathcal{L}) \tfrac{2c^3_j}{c_3-c_4} H \big(\tfrac{1}{c_3}D_{n,3} -\tfrac{1}{c_4}D_{n,4}\big), \q \q   \hspace{2.2cm}  j=5,6 \\
    u_{n+1} = u_n &+ \varphi_1 (H\mathcal{L}) h F(t_n, u_n)+ \varphi_{2} (H\mathcal{L}) \tfrac{1}{c_5-c_6} H \big(\tfrac{-c_6}{c_5}D_{n,5} +\tfrac{c_5}{c_6}D_{n,6}\big) \\
    &+\varphi_{3} (H\mathcal{L}) \tfrac{2}{c_5-c_6} H \big(\tfrac{1}{c_5}D_{n,5} -\tfrac{1}{c_6}D_{n,6}\big). 
\end{aligned}
\end{equation} 
Since the pairs of internal stages $\{U_{n,3}, U_{n,4}\}$ and $\{U_{n,5}, U_{n,6}\}$
are independent of one other (they can be computed simultaneously)
and have the same format, this scheme behaves like a 4-stage method.  
Hence, instead of using 6 polynomials we need only 4 to derive the
following family of fourth-order MERK schemes, which we call \texttt{MERK4}:  
\begin{equation} \label{eq:MERK4}
\begin{aligned}
  \hat{p}_{n,2}(\tau) &= \mathcal{N}(t_n, \hat{u}_n),   \q \hspace{6.05cm}  \tau \in [0, c_2 H] \\
  \hat{p}_{n,3}(\tau) &= \hat{p}_{n,4}(\tau) = \mathcal{N}(t_n, \hat{u}_n)+ \tfrac{\tau}{c_2 H}\widehat{D}_{n,2},  \q \hspace{2.8cm} \tau \in [0, c_3 H] \\
  % &\text{store solution at $c_4 h$ (since $c_4<c_3$) to get $U_{n4}$} \\
  \hat{p}_{n,5}(\tau) &= \hat{p}_{n,6}(\tau) = \mathcal{N}(t_n, \hat{u}_n)+ \tfrac{\tau}{H}\big(\tfrac{-c_4}{c_3(c_3 - c_4)}\widehat{D}_{n,3} + \tfrac{c_3}{c_4(c_3 - c_4)}\widehat{D}_{n,4} \big)\\
  & + \tfrac{\tau^2}{H^2}\big(\tfrac{1}{c_3(c_3-c_4)}\widehat{D}_{n,3} - \tfrac{1}{c_4(c_3 - c_4)}\widehat{D}_{n,4} \big), \q  \hspace{2.4cm} \tau \in [0, c_5 H] \\
  % &\text{store solution at} $c_6 h$ (\text{since} $c_6<c_5$) \text{to get} $U_{n6}$ \\
  \hat{q}_{n}(\tau) & = \mathcal{N}(t_n, \hat{u}_n) + \tfrac{\tau}{H}\big(\tfrac{-c_6}{c_5(c_5 - c_6)}\widehat{D}_{n,5} + \tfrac{c_5}{c_6(c_5 - c_6)}\widehat{D}_{n,6} \big) \\
  &+ \tfrac{\tau^2}{H^2}\big(\tfrac{1}{c_5(c_5-c_6)}\widehat{D}_{n,5} - \tfrac{1}{c_6(c_5 - c_6)}\widehat{D}_{n,6} \big), \q  \hspace{2.4cm}  \tau \in [0,  H].
\end{aligned}
\end{equation}
For our numerical experiments, we choose the coefficients $c_2=c_3=\tfrac{1}{2}$,
$c_4=c_6=\tfrac{1}{3}$, and $c_5=\tfrac{5}{6}$.  With this choice,
we may then solve the linear ODE \eqref{eq13nc} using the polynomial
$\hat{p}_{n,3}(\tau)$ on $[0, c_3 H]$ to get both $\widehat{U}_{n,3}
\approx U_{n,3}=v_{n,3}(c_3 H)$ and $\widehat{U}_{n,4} \approx
U_{n,4}$ (since $c_4<c_3$) without solving an additional fast
differential equation on $[0, c_4 H]$.
Similarly, we may solve the linear ODE \eqref{eq13nc} with the
polynomial $\hat{p}_{n,5}(\tau)$ on $[0, c_5 H]$ to obtain both
$\widehat{U}_{n,5} \approx U_{n,5}$ and $\widehat{U}_{n,6} \approx
U_{n,6}$.  As a result, the fast time scale must only be evolved for a
total duration of $\frac{17}{6}H$ for each slow time step.
%%-----------------------------------------------
\subsection{Fifth-order methods}
Simiar to fourth-order ExpRK methods, there are no
stiffly-accurate fifth-order methods available in the literature
that fulfill Theorem~\ref{theorem1}. In particular, the only existing
fifth-order scheme (\texttt{expRK5s8}, that requires 8 stages) was
constructed in \cite{LO14b}. However, its coefficients
$a_{75}(H\mathcal{L})$, $a_{76}(H\mathcal{L})$, $a_{85}(H\mathcal{L})$,
$a_{86}(H\mathcal{L})$ and $a_{87}(H\mathcal{L})$  involve several
different linear combinations of $\varphi_k (c_i H\mathcal{L})$ with
different scalings $c_6, c_7, c_8$, and may not be used to create a
MERK method.  Again, in \cite{Luan19}, we have constructed a new
family of efficient, fifth-order, 10-stage, stiffly-accurate ExpRK
methods (called \texttt{expRK5s10}) that fulfills Theorem~\ref{theorem1}:
\begin{subequations}\label{eq:expRK5}
  \begin{equation}\label{eq:expRK5a}
  \begin{aligned}
    U_{n,2} = u_n &+ \varphi_1 (c_2 H\mathcal{L}) c_2 HF(t_n,u_n), \\
    U_{n,k} = u_n &+ \varphi_1 (c_k H\mathcal{L}) c_k HF(t_n, u_n) + \varphi_2 (c_k H\mathcal{L}) \tfrac{c^2_k}{c_2} H D_{n,2},  \hspace{0.9cm} k=3,4 \\
    U_{n,j} = u_n &+ \varphi_1 (c_j H\mathcal{L})c_j HF(t_n, u_n)+ \varphi_{2} (c_j H\mathcal{L}) c^2_j  H \big(\alpha_3 D_{n,3} +\alpha_4 D_{n,4}\big)\\
    &+  \varphi_{3} (c_j H\mathcal{L}) c^3_j H \big(\beta_3 D_{n,3} -\beta_4 D_{n,4}\big), \q \hspace{2.5cm} j=5,6,7 \\
  \end{aligned}
\end{equation} 
  \begin{equation}\label{eq:expRK5b}
  \begin{aligned}
    U_{n,m} = u_n &+ \varphi_1 (c_m H\mathcal{L})c_m H F(t_n, u_n)\\
    &+ \varphi_{2} (c_m H\mathcal{L}) c^2_m  H \big(\alpha_5 D_{n,5} +\alpha_6 D_{n,6}+\alpha_7 D_{n,7} \big)\\
    &+ \varphi_{3} (c_m H\mathcal{L}) c^3_m H \big(\beta_5 D_{n,5} -\beta_6 D_{n,6}-\beta_7 D_{n,7}\big) \\
    &+ \varphi_{4} (c_m H\mathcal{L}) c^4_m H \big(\gamma_5 D_{n,5} +\gamma_6 D_{n,6}+\gamma_7 D_{n,7}\big), \hspace{1.1cm} m=8,9,10 \\
    u_{n+1} = u_n &+ \varphi_1 (H\mathcal{L}) H F(t_n, u_n)+ \varphi_{2} (H\mathcal{L}) H \big(\alpha_8 D_{n,8} + \alpha_9 D_{n,9} +\alpha_{10} D_{n,10} \big) \\
    &-\varphi_{3} (H\mathcal{L})  H \big(\beta_8 D_{n,8} + \beta_9 D_{n,9} +\beta_{10} D_{n,10} \big)\\
    &+\varphi_{4} (H\mathcal{L})  H \big(\gamma_8 D_{n,8} + \gamma_9 D_{n,9} +\gamma_{10} D_{n,10} \big)
  \end{aligned}
\end{equation} 
with coefficients given by
\begin{equation}\label{eq:coefficients}
  \begin{aligned}
    \alpha_3 &= \tfrac{c_4}{c_3 (c_4-c_3)},  \ \alpha_4=\tfrac{c_3}{c_4 (c_3-c_4)},\\
    \alpha_5 &= \tfrac{c_6 c_7}{c_5 (c_5-c_6)(c_5 - c_7)},\  \alpha_6=\tfrac{c_5 c_7}{c_6 (c_6-c_5)(c_6 - c_7)}, \  \alpha_7=\tfrac{c_5 c_6}{c_7 (c_7-c_5)(c_7 - c_6)},\\
    \alpha_8 &= \tfrac{c_9 c_{10}}{c_8 (c_8-c_9)(c_8 - c_{10})},\   \alpha_9=\tfrac{c_8 c_{10}}{c_9 (c_9-c_8)(c_9 - c_{10})},\ \alpha_{10}=\tfrac{c_8 c_{9}}{c_{10} (c_{10}-c_8)(c_{10} - c_{9})}   \\
    \beta_3 &= \tfrac{2}{c_3 (c_3-c_4)},  \beta_4=\tfrac{2}{c_4 (c_3-c_4)}, \\
    \beta_5 &= \tfrac{2(c_6+ c_7)}{c_5 (c_5-c_6)(c_5 - c_7)},\  \beta_6=\tfrac{2(c_5 +c_7)}{c_6 (c_6-c_5)(c_6 - c_7)}, \  \beta_7=\tfrac{2(c_5+ c_6)}{c_7 (c_7-c_5)(c_7 - c_6)},\\
    \beta_8 &= \tfrac{2(c_9 +c_{10})}{c_8 (c_8-c_9)(c_8 - c_{10})},\   \beta_9=\tfrac{2(c_8+ c_{10})}{c_9 (c_9-c_8)(c_9 - c_{10})},\ \beta_{10}=\tfrac{2(c_8 +c_{9})}{c_{10} (c_{10}-c_8)(c_{10} - c_{9})}\\
    \gamma_5 &= \tfrac{6}{c_5 (c_5-c_6)(c_5 - c_7)},\  \gamma_6=\tfrac{6}{c_6 (c_6-c_5)(c_6 - c_7)}, \  \gamma_7=\tfrac{6}{c_7 (c_7-c_5)(c_7 - c_6)},\\
    \gamma_8 &= \tfrac{6}{c_8 (c_8-c_9)(c_8 - c_{10})},\   \gamma_9=\tfrac{6}{c_9 (c_9-c_8)(c_9 - c_{10})},\ \gamma_{10}=\tfrac{6}{c_{10} (c_{10}-c_8)(c_{10} - c_{9})}.
  \end{aligned}
\end{equation} 
\end{subequations}
Although this scheme has 10 stages, again its structure facilitates an
efficient implementation.  Specifically, we note that there are
multiple stages $U_{n,i}$ which share the same format (same matrix
functions with different inputs $c_i$), and are independent of one another
(namely, $\{U_{n,3}, U_{n,4}\}$, $\{U_{n,5}, U_{n,6}, U_{n,7}\}$, and
$\{U_{n,8}, U_{n,9}, U_{n,10}\}$).  These groups of stages can again
be computed simultaneously, allowing the scheme to behave like a
5-stage method.  We therefore propose the corresponding fifth-order
MERK methods that use only 5 polynomials, which we name \texttt{MERK5}:
\begin{equation} \label{eq:MERK5}
  \begin{aligned}
    \hat{p}_{n,2}(\tau)&=  \mathcal{N}(t_n, \hat{u}_n),   \hspace{6.75cm} \tau \in [0, c_2 H] \\
    \hat{p}_{n,3}(\tau)&=\hat{p}_{n,4}(\tau)=  \mathcal{N}(t_n, \hat{u}_n)+ \tfrac{\tau}{c_2 H}\widehat{D}_{n,2},  \hspace{3.5cm}   \tau \in [0, c_3 H] \\
    % &\text{\red{store solution at $c_4 h$ (since $c_4<c_3$) to get $U_{n,4}$}} \\[-10pt]
    \hat{p}_{n,5}(\tau)&=\hat{p}_{n,6}(\tau)=\hat{p}_{n,7}(\tau)=  \mathcal{N}(t_n, \hat{u}_n)+ \tfrac{\tau}{H}\big(\alpha_3 \widehat{D}_{n,3} + \alpha_4 \widehat{D}_{n,4} \big)\\
    &  \hspace{3cm}+ \tfrac{\tau^2}{2 H^2}\big(\beta_3 \widehat{D}_{n,3} -\beta_3 \widehat{D}_{n,4} \big), \hspace{1.8cm}  \tau \in [0, c_5 H] \\
    \hat{p}_{n,8}(\tau)&=\hat{p}_{n,9}(\tau)=\hat{p}_{n,10}(\tau)=  \mathcal{N}(t_n, \hat{u}_n)+ \tfrac{\tau}{H}\big(\alpha_5 \widehat{D}_{n,5} +\alpha_6 \widehat{D}_{n,6} +\alpha_7 \widehat{D}_{n,7} \big)\\
    & \hspace{3cm} - \tfrac{\tau^2}{2 H^2}\big(\beta_5 \widehat{D}_{n,5} + \beta_6 \widehat{D}_{n,6} +\beta_7 \widehat{D}_{n,7} \big) \\
    & \hspace{3cm} + \tfrac{\tau^3}{6 H^3}\big(\gamma_5 \widehat{D}_{n,5} +\gamma_6 \widehat{D}_{n,6}+\gamma_7 \widehat{D}_{n,7}\big), \hspace{0.4cm}   \tau \in [0, c_8 H] \\
    % &\text{\red{store solution at $c_6 h$ (since $c_6<c_5$) to get $U_{n,6}$}} \\[-5pt]
    \hat{q}_{n}(\tau)&=  \mathcal{N}(t_n, \hat{u}_n) + \tfrac{\tau}{H}(\alpha_8 \widehat{D}_{n,8} + \alpha_9 \widehat{D}_{n,9} +\alpha_{10} \widehat{D}_{n,10})\\
    &\hspace{1.9cm}- \tfrac{\tau^2}{2 H^2} (\beta_8 \widehat{D}_{n,8} + \beta_9 \widehat{D}_{n,9} +\beta_{10} \widehat{D}_{n,10} )\\
    &\hspace{1.9cm}+ \tfrac{\tau^3}{6 H^3}\big(\gamma_8 \widehat{D}_{n,8} + \gamma_9 \widehat{D}_{n,9} +\gamma_{10} \widehat{D}_{n,10} \big), \hspace{1.25cm} \tau \in [0,  H].
  \end{aligned}
\end{equation}
For our numerical experiments, we choose
$c_2=c_3=c_5=c_9=\tfrac{1}{2}$, $c_4=c_6=\tfrac{1}{3}$, $c_7=\tfrac{1}{4}$,
$c_8=\tfrac{7}{10}$, and $c_{10}=\tfrac{2}{3}$.  Again, since $c_4<c_3$,
when solving the fast time-scale problem \eqref{eq13nc} with
polynomial $\hat{p}_{n,3}(\tau)$ on $[0, c_3 H]$ gives
$\widehat{U}_{n,3} \approx U_{n,3}=v_{n,3}(c_3 h)$ and
$\widehat{U}_{n,4} \approx U_{n,4}=v_{n,3}(c_4 h)$. Similarly, since
$c_7<c_6<c_5$, $\widehat{U}_{n,5}, \widehat{U}_{n,6}$, and
$\widehat{U}_{n,7}$ can be obtained by solving a single fast time-scale
problem with polynomial $\hat{p}_{n,5}(\tau)$ on $[0,
c_5 H]$.  Finally since $c_9<c_{10}<c_8$, one can compute
$\widehat{U}_{n,8}$, $\widehat{U}_{n,9}$, and $\widehat{U}_{n,10}$ by
solving a single fast time-scale problem with
polynomial $\hat{p}_{n,8}(\tau)$ on $[0, c_8 H]$.  The sum total of
these solves corresponds to evolving the fast time scale for an overall
duration of $\frac{16}{5}H$ for each slow time step.
%%%-------------------------------------------------------------------------

%%% Local Variables:
%%% mode: latex
%%% TeX-master: "MERK"
%%% End:

%%%%%%%%%%%%%%%%%%%%%%%%%%%%%%%%%
\section{Numerical experiments}
\label{sec6}
In this section we present results from a variety of numerical tests
to examine the performance of the proposed {\merkthree}, {\merkfour} and
{\merkfive} methods.  These tests are designed to confirm the 
theoretical convergence rates from Section \ref{sec:analysis}, and
compare efficiency against the Multirate Infinitesimal Step method
\texttt{MIS-KW3}, which uses a similar approach of evolving the fast
component using modified systems of differential equations
\cite{knothwolke98,Wensch2009,Schlegel2009,Schlegel2012b}.  Unless
otherwise noted, we run these methods with inner explicit Runge-Kutta
ODE solvers of the same order of convergence as the MERK method, $p$:
\begin{itemize}
\item Third order {\mis} uses the \texttt{Knoth-Wolke-ERK} inner method \cite{knothwolke98};
\item Third order {\merkthree} uses the \texttt{ERK-3-3} inner method,
    $\begin{array}{c|ccc}
      0 &  &  & \\
      1/2 & 1/2 & & \\
      1 & -1 & 2 & \\
      \hline
        & 1/6 & 2/3 & 1/6
    \end{array}$;
\item Fourth order {\merkfour} uses the \texttt{ERK-4-4} inner method
  \cite[Table 1.2, left]{hairer93};
\item Fifth order {\merkfive} uses the \texttt{Cash-Karp-ERK} inner
  method \cite{cashkarp90}.
\end{itemize}
We note that although Theorem \ref{theorem2} guarantees that when
using a MERK method of order $p$, the internal stage solutions
\eqref{eq13nc} can be computed with a solver of order $q = p-1$
and the step solution \eqref{eq13nd} can use a solver of order
$r = p$, for simplicity we have used $r=q=p$ in the majority of our
tests. However, we more closely investigate these inner solver
order requirements in Section \ref{subsec:fast} below.

Not all of our test problems have convenient analytical solutions; for
these tests, we compute a reference solution using an 8th order
explicit or a 12th order implicit Runge-Kutta method with a time step
smaller than the smallest micro time step $h$.  When computing
solution error, we report the maximum absolute error over all time
steps and solution components.  From these, we compute convergence
rates using a linear least-squares fit of the log-error versus
log-macro time step $H$. For each test we present three types of
plots: one convergence plot (error vs $H$) and two efficiency plots.
Generally, efficiency plots present error versus the computational
cost. However in the multirate context, fast and slow function
costs can differ dramatically.  As such, we separately consider
efficiency using total function calls and slow function calls.  Since
the dominant number of total calls are from the fast function, the
``total'' plots represent the method efficiency for simulations with
comparable fast/slow function cost, whereas the ``slow-only'' plots
represent the method efficiency for simulations in which the slow
function calls are significantly more expensive (as explained in
Section \ref{section1} as our original motivation for multirate methods).
Individual applications will obviously lie somewhere between these
extremes, but we assume that they are typically closer to the
``slow-only'' results.

Applications scientists traditionally use multirate solvers for one
of two reasons.  The first category are concerned with simulations of
stiff systems, but where they choose to use a subcycled explicit
method instead an implicit one for the stiff portion of the problem.
Generally, these applications are primarily concerned with selecting
$h$ to satisfy stability of the fast time scale (instead of accuracy).
The second category consider simulations wherein it is essential to
capture the coupling between the slow and fast times scales
accurately, since temporal errors at the fast time scale can
significantly deteriorate the slow time scale solution;
here $h$ is chosen based on accuracy considerations. We therefore
separately explore test problems in both of these categories in the
Sections \ref{subsec:category1} and \ref{subsec:category2} below.

To facilitate reproducibility of the results in this section, we have
provided an open-source MATLAB implementation of the {\merkthree},
{\merkfour}, {\merkfive} and {\mis} methods, along with scripts to
perform all tests from this section \cite{MERK_repo}.

\subsection{Category I}
\label{subsec:category1}
As this category of problems is concerned with stability at the fast
time scale, we choose a fixed, linearly stable micro time step $h$,
and vary the macro time step $H$ (and similarly, $m=H/h$).  To this
end, we focus on two stiff applications: a reaction diffusion problem
and the brusselator problem.

%___________________________________________________________________%
%                      Reaction Diffusion                           %
%___________________________________________________________________%

\subsubsection{Reaction Diffusion}
\label{subsubsec:reaction_diffusion}
We consider a reaction diffusion problem with a traveling wave
solution similar to the one considered by Savcenco et
al.~\cite{savcenco},
\begin{align*}
  &u_t = \frac{1}{100} u_{xx} + u^2(1 - u), \qquad 0<x<5, \quad 0<t\le 3,\\
  &u_x(0,t) = u_x(5,t) = 0 , \qquad u(x,0) = (1+ e^{\lambda(x-1)})^{-1},
\end{align*}
where $\lambda = 5 \sqrt{2}$.
We discretize in space using a second order accurate central finite
difference scheme using $1000$ spatial points. This gives
us a system for which we take $\mathcal{L}$ and $\mathcal{N}(t,u(t))$
to be the discretized versions of $\frac{1}{100} u_{xx}$ and $u^2(1 -
u)$ respectively. The micro time step is chosen to satisfy the
Courant-Friedrichs-Lewy (CFL) linear stability condition, $h = 10^{-3}$.

In the left of Figure \ref{fig:randd} we plot the method convergence
as $H$ is varied, which shows slighty convergence rates that are
better than predicted for all methods tested.  As this behavior is not
consistently observed for the remaining test problems, we believe that
this is an artifact of this particular test problem.  Here, we compute
the best-fit rates using only the error values larger than
$\sim10^{-13}$, where the error stagnates due to the accuracy of the
reference solution.  

\begin{figure}[h!]
\centering
\begin{subfigure}[b]{0.49\textwidth}
\includegraphics[width =\textwidth]{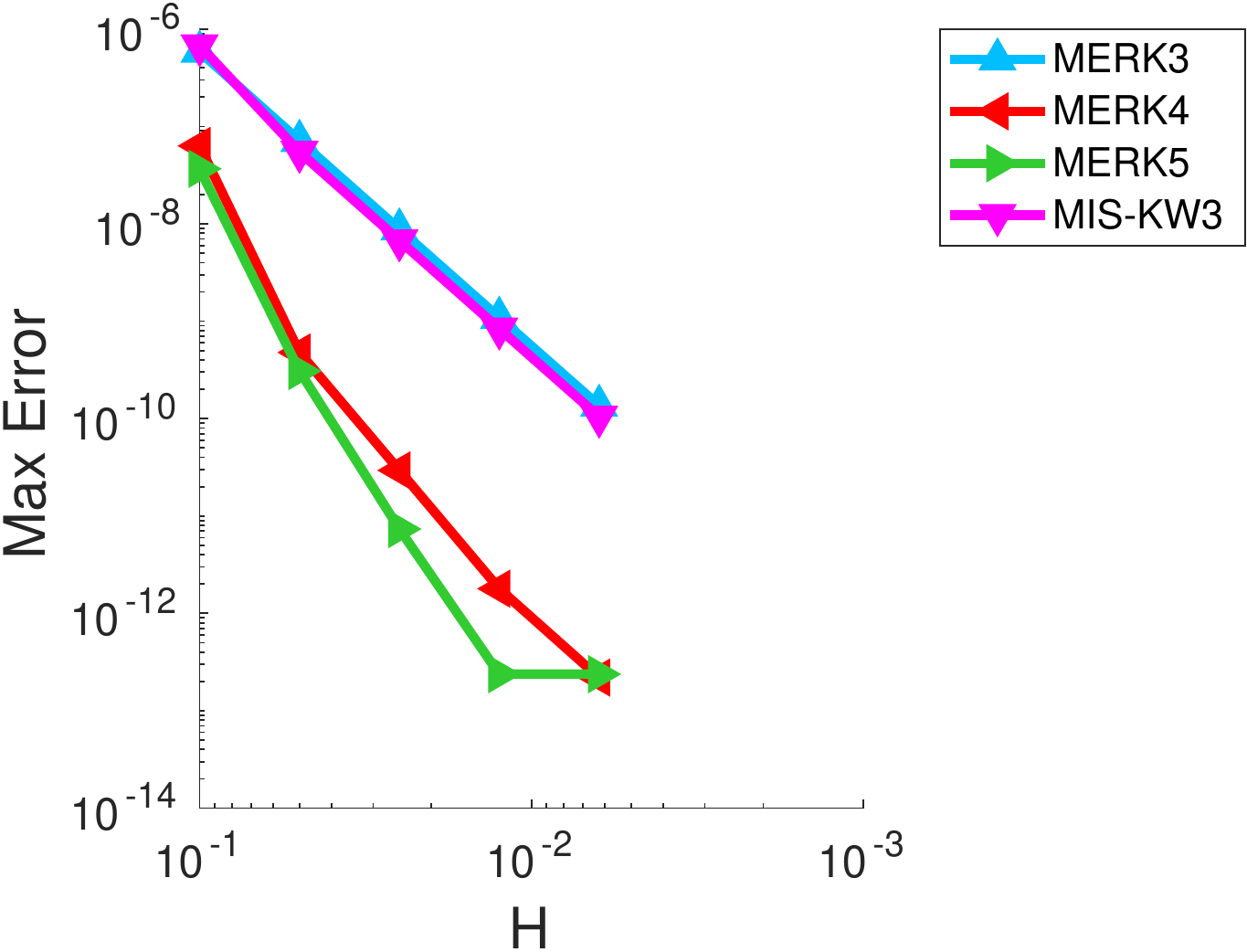}
\end{subfigure}
\begin{subfigure}[b]{0.49\textwidth}
\includegraphics[width = \textwidth]{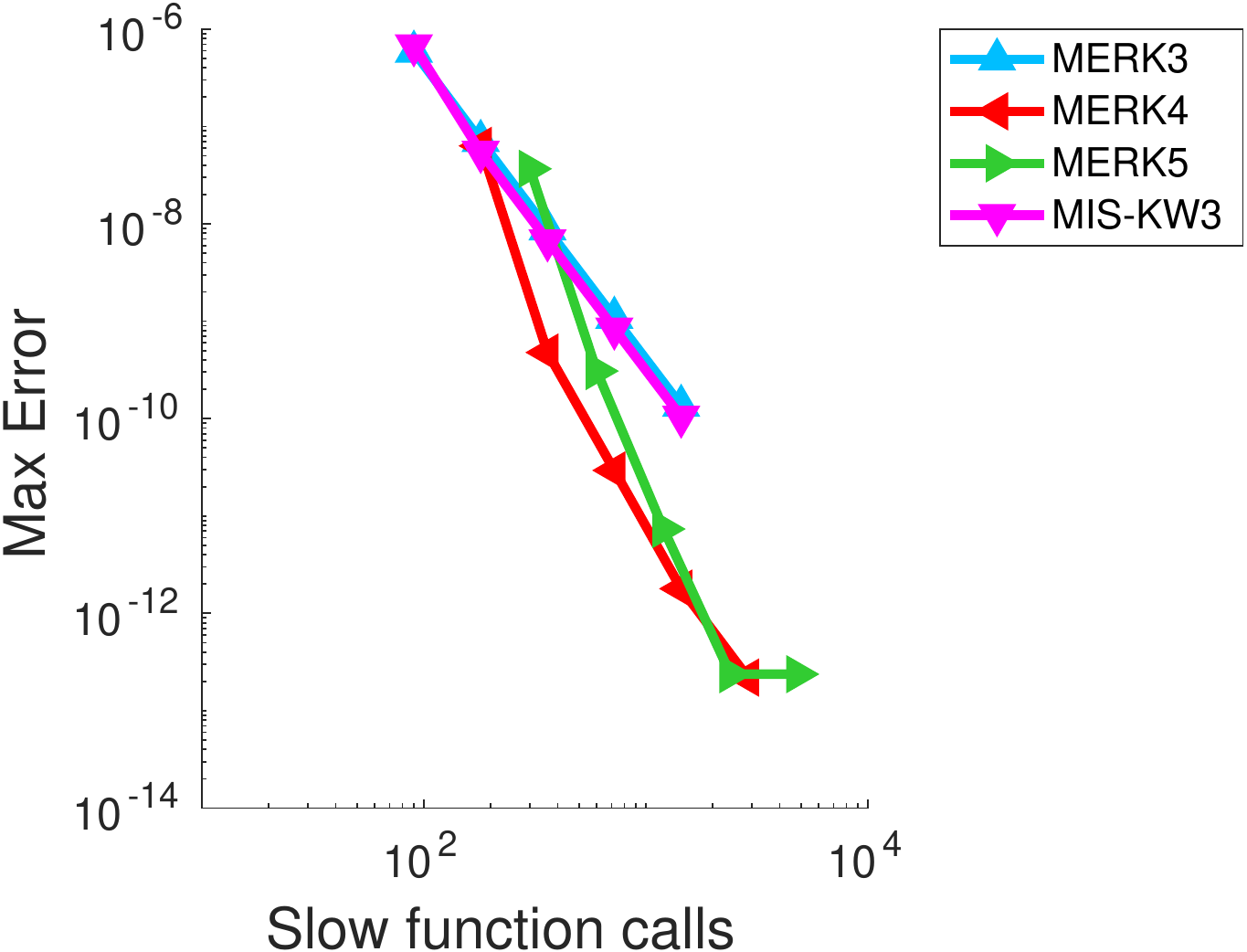}
\end{subfigure}
\caption{\small Reaction diffusion convergence (left) and
  ``slow-only'' efficiency (right). The best fit convergence rates are
  3.03, 4.93, 5.71, 3.20 (\merkthree, \merkfour, \merkfive, and \mis,
  resp.).  The most ``efficient'' methods at a given error are to the
  left of their less efficient counterparts.}
\label{fig:randd}
\end{figure}

The efficiency plots for both test problems in this category are very
similar, so we present the ``slow-only'' efficiency plot for this
problem in the right of Figure \ref{fig:randd}, saving the ``total''
efficiency plot for the next test.  Here, we note that for tolerances
larger than $10^{-7}$, {\merkthree} and {\mis} are the most efficient,
but for tighter tolerances {\merkfour} is the best.  Although
{\merkfive} has a higher rate of convergence, the increased cost per
step causes it to lag behind until it reaches the reference solution
accuracy, where it begins to overtake {\merkfour}.

%__________________________________________________________%
%                  Brusselator                             %
%__________________________________________________________%

\subsubsection{Brusselator}
\label{subsubsec:brusselator}
The brusselator is an oscillating chemical reaction problem for which
one of the reaction products acts as a catalyst. It is widely used as
a test for ODE solvers, including IMEX and multirate methods.  We use a
variant of this stiff nonlinear ODE system given by:
\begin{align*}
  \begin{bmatrix}
    u \\ v \\ w
  \end{bmatrix}' &= \begin{bmatrix}
    a - (w + 1)u + u^2v\\ wu - u^2v\\ \frac{b-w}{\epsilon} - uw
  \end{bmatrix},\qquad
  \mathbf{u}(0) = \begin{bmatrix} 1.2 \\ 3.1 \\ 3 \end{bmatrix},
\end{align*}
over the interval $t\in (0,2]$, with parameters $a = 1, b = 3.5$ and
$\frac{1}{\epsilon} = 100$.  We convert this to have the multirate
form (\ref{eq1}) by defining
\begin{align*}
  \mathcal{L} = \begin{bmatrix}
    0 & 0 & 0 \\ 0 & 0 & 0 \\ 0 & 0 & \frac{-1}{\epsilon}
  \end{bmatrix},\hspace{7mm} \mathcal{N}(t,\mathbf{u}(t)) =  \begin{bmatrix}
    a - (w + 1)u + u^2v\\ wu - u^2v\\ \frac{b}{\epsilon} - uw
  \end{bmatrix}.
\end{align*}

In the left of Figure \ref{fig:brus} we plot the error versus $H$, and
list the corresponding best-fit convergence rates.  We observe
that all the tested methods perform slightly worse than their
predicted convergence rates, which we attribute to order reduction due
to the stiffness of the problem; however, the relative convergence
rates of each method compare as expected against one another.

\begin{figure}[h!]
\centering
\begin{subfigure}[b]{0.49\textwidth}
\includegraphics[width = \textwidth]{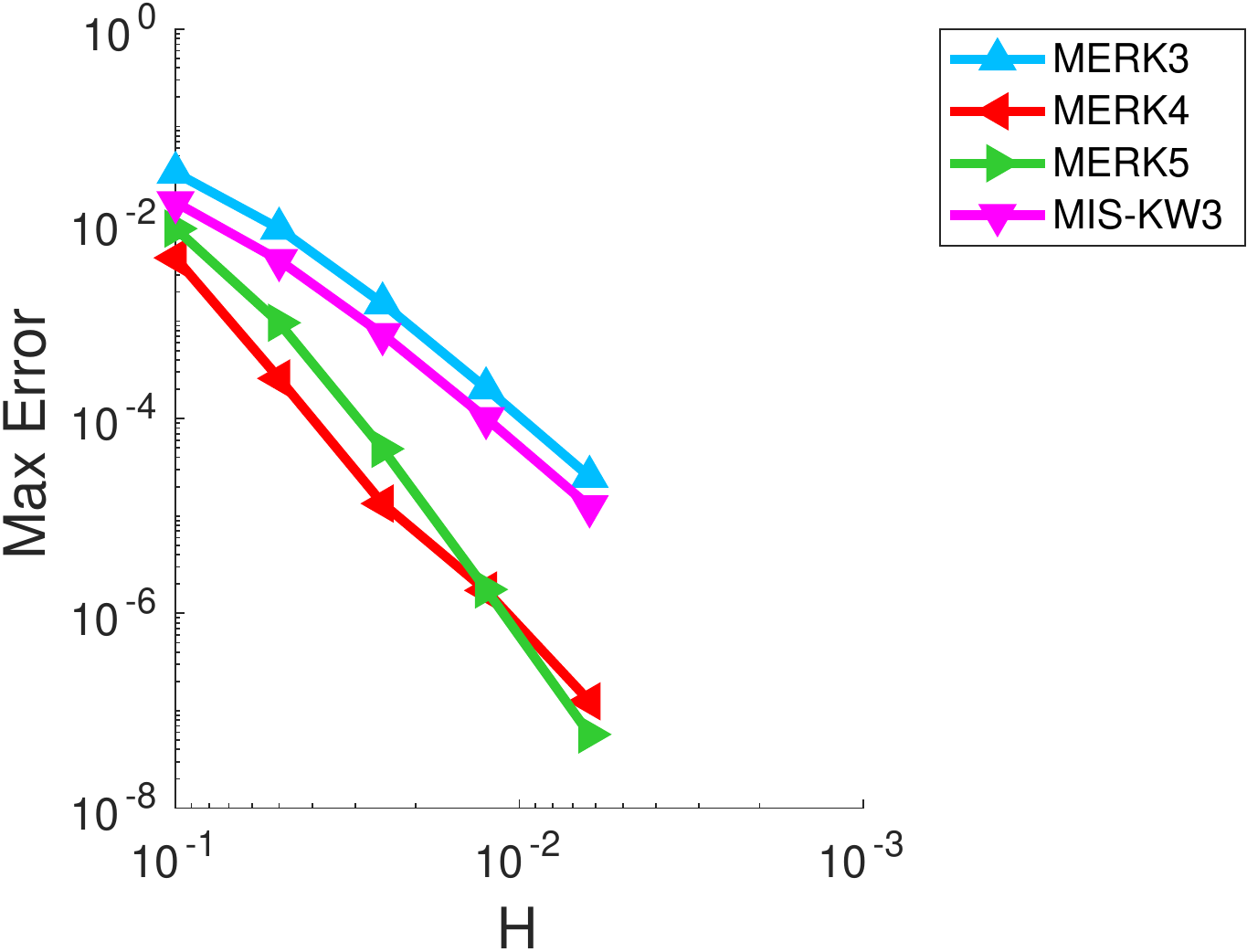}
\end{subfigure}
\begin{subfigure}[b]{0.49\textwidth}
\includegraphics[width = \textwidth]{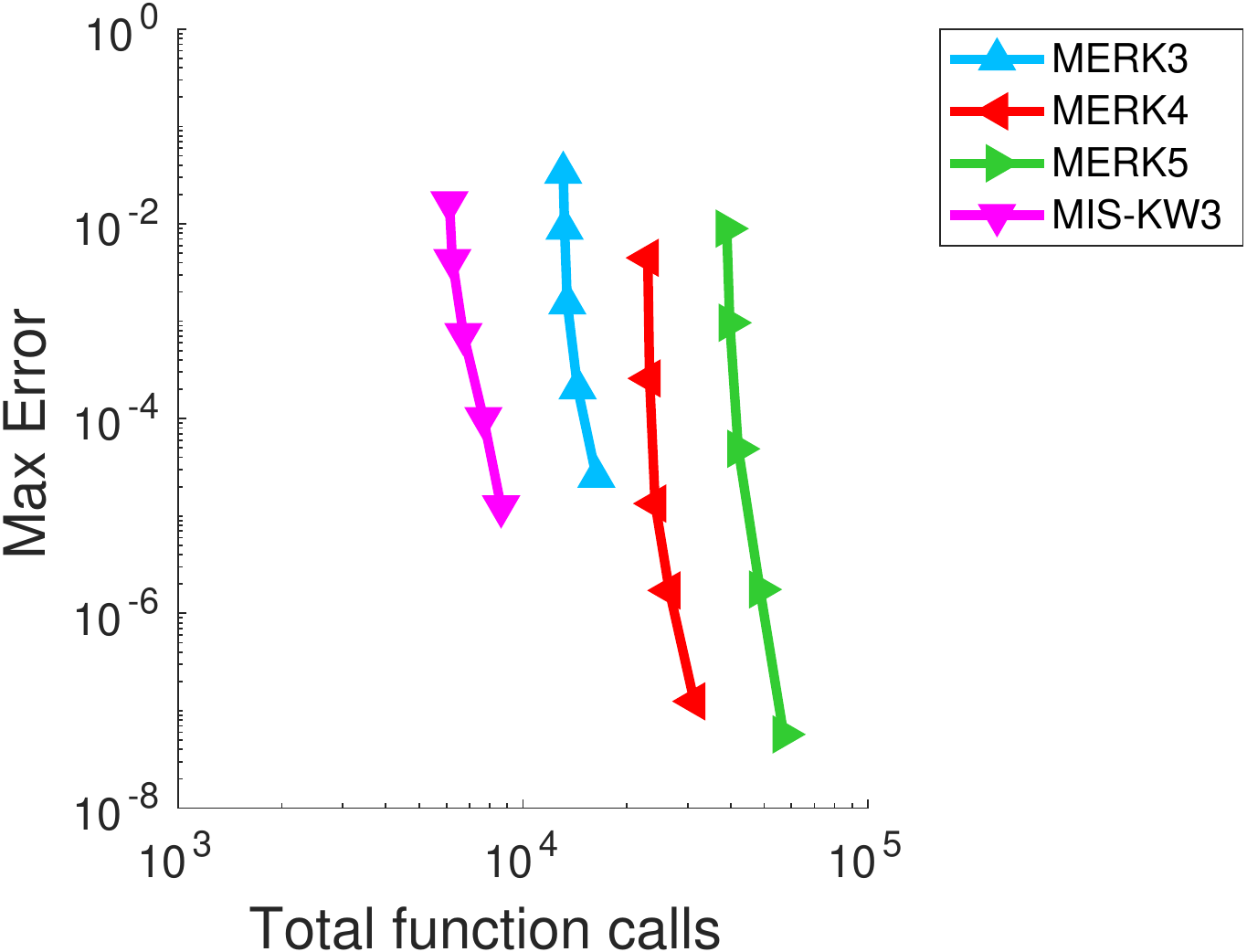}
\end{subfigure}
\caption{\small Brusselator convergence (left) and ``total''
  efficiency (right).  The best fit convergence rates are
  2.62, 3.75, 4.36 and 2.61 (\merkthree, \merkfour, \merkfive, and \mis,
  respectively).  Note the near-vertical lines in the efficiency
  plots, indicating the dominance of ``fast'' function calls in the
  estimate of total cost.}
\label{fig:brus}
\end{figure}

For this test problem, we plot the efficiency based on total function
calls in the right of Figure \ref{fig:brus}.  We note that each curve
is almost vertical since the micro time step $h$ is held constant for
these tests, and is significantly smaller than $H$.  Here, {\mis}
takes the least amount of total function calls since its structure
ensures that it only traverses the time step interval $[t_n,t_n+H]$
once when evaluating the modified ODEs, whereas {\merkthree},
{\merkfour} and {\merkfive} require approximately 2, 3 and 3
traversals, respectively.  We note that although these additional
traversals of the time step interval $[t_n,t_n+H]$ result in
significant increases in the number of fast function calls, the number
of potentially more costly slow function calls for all methods is
equal to the number of slow stages.

%________________________________________________________________________%
%                    Fast coupled into slow                              %
%________________________________________________________________________%
\subsection{Category II}
\label{subsec:category2}

Recalling that our second category of multirate applications focuses
on accurately coupling the fast and slow processes, for these test 
problems we choose a fixed time scale separation factor $m$ for each
method/test, and vary $H$ (and proportionally, $h=H/m$).  For this
group of tests we consider a linear multirate problem from Estep et
al.~\cite{estep} for which the fast variables are coupled into the
slow equation (one-directional coupling) and a linear multirate
problem of our own design where both the fast and slow variables are
coupled (bi-directional coupling).  Since the ``optimal'' value of $m$
for each multirate algorithm is problem-dependent, we describe our
approach for determining this $m$ value in Section
\ref{subsubsec:one_directional} below.

\subsubsection{One-directional coupling}
\label{subsubsec:one_directional}
We consider a linear system of ODEs~\cite{estep}:
\begin{align}
  \begin{bmatrix}
    u \\ v \\ w
  \end{bmatrix}' &= \begin{bmatrix}
    0 & -50 & 0 \\ 50 & 0 & 0 \\ 1 & 1 & -1
  \end{bmatrix}\begin{bmatrix}
    u \\ v \\w
  \end{bmatrix}, \label{eq:estep_problem}\qquad
  \mathbf{u}(0) = \begin{bmatrix} 1\\0\\2\end{bmatrix},
\end{align}
over the interval $t\in (0,1]$. This has analytical solution $u(t) =
\cos(50t)$, $v(t) = \sin(50t)$, and $w(t) = \frac{5051}{2501} e^{-t} -
\frac{49}{2501}\cos(50t) + \frac{51}{2501}\sin(50t)$.
We convert this problem to multirate form \eqref{eq1} by decomposing
it as:
\begin{align*}
  \mathcal{L} = \begin{bmatrix}
    0 & -50 & 0 \\ 50 & 0 & 0 \\ 1 & 1 & 0
  \end{bmatrix},\hspace{7mm} \mathcal{N}(t,\mathbf{u}(t)) = \begin{bmatrix}
    0 \\ 0 \\-w
  \end{bmatrix}.
\end{align*}

We first discuss our approach in determining the ``optimal''
time-scale separation factor $m$.  For illustration, we consider
{\merkfour} on this problem; however, we apply this approach to all
methods for both this test and the following bi-directional coupling
test in Section \ref{subsubsec:bidirectional}.  We begin by 
repeatedly solving the problem \eqref{eq:estep_problem} using the
multirate method with different factors $m =
\{5,10,25,50,75,85,100,125\}$. For each value of $m$, we vary $H$ (and
hence $h=H/m$).  We then analyze the resulting ``total'' and
``slow-only'' efficiency plots for each fixed $m$ value, as shown in
Figure \ref{fig:estepprocess_eff}.

\begin{figure}[h!]
\centering
\begin{subfigure}[b]{0.49\textwidth}
\includegraphics[width = \textwidth]{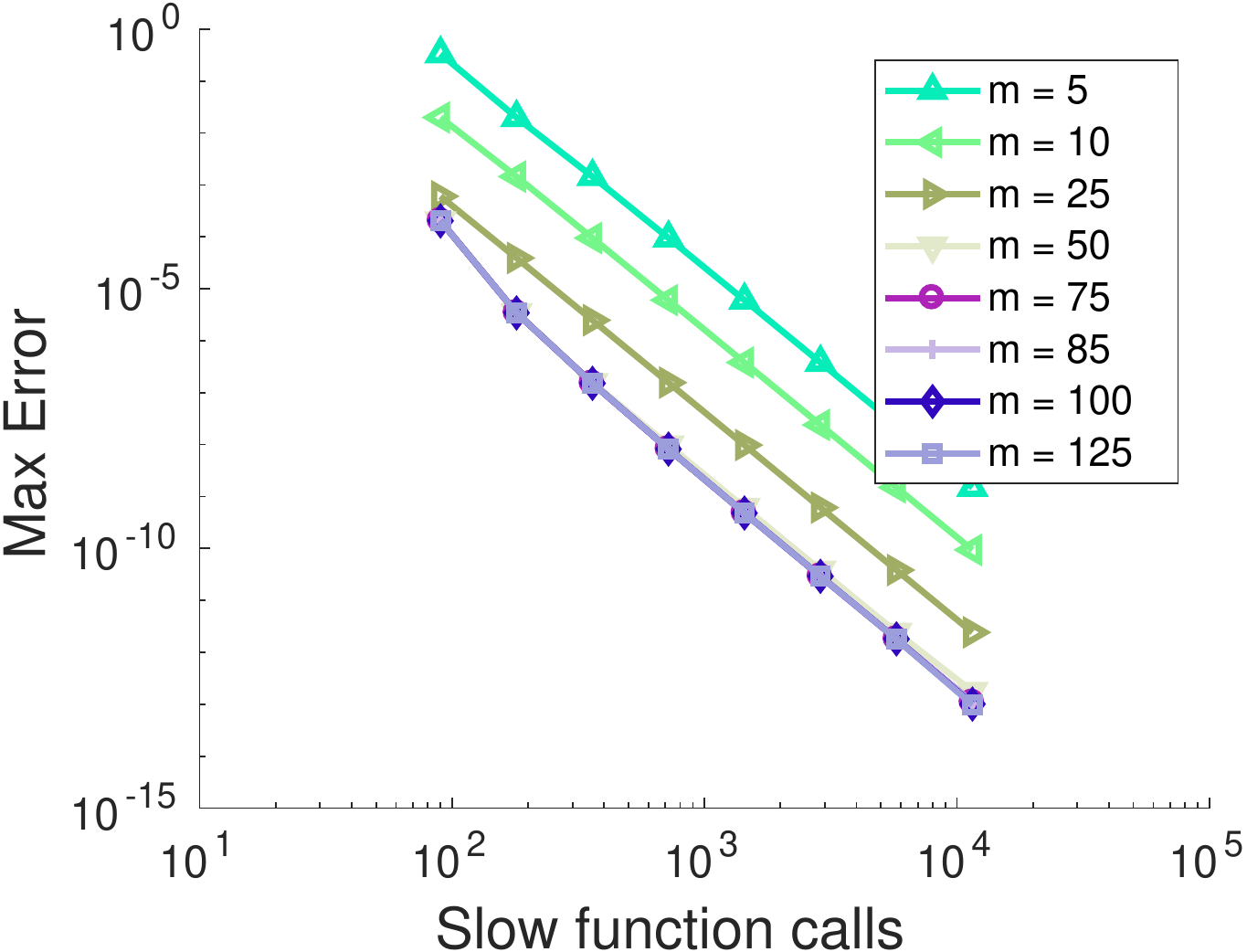}
\caption{}
\label{fig:estepprocess_slo}
\end{subfigure}
\begin{subfigure}[b]{0.49\textwidth}
\includegraphics[width = \textwidth]{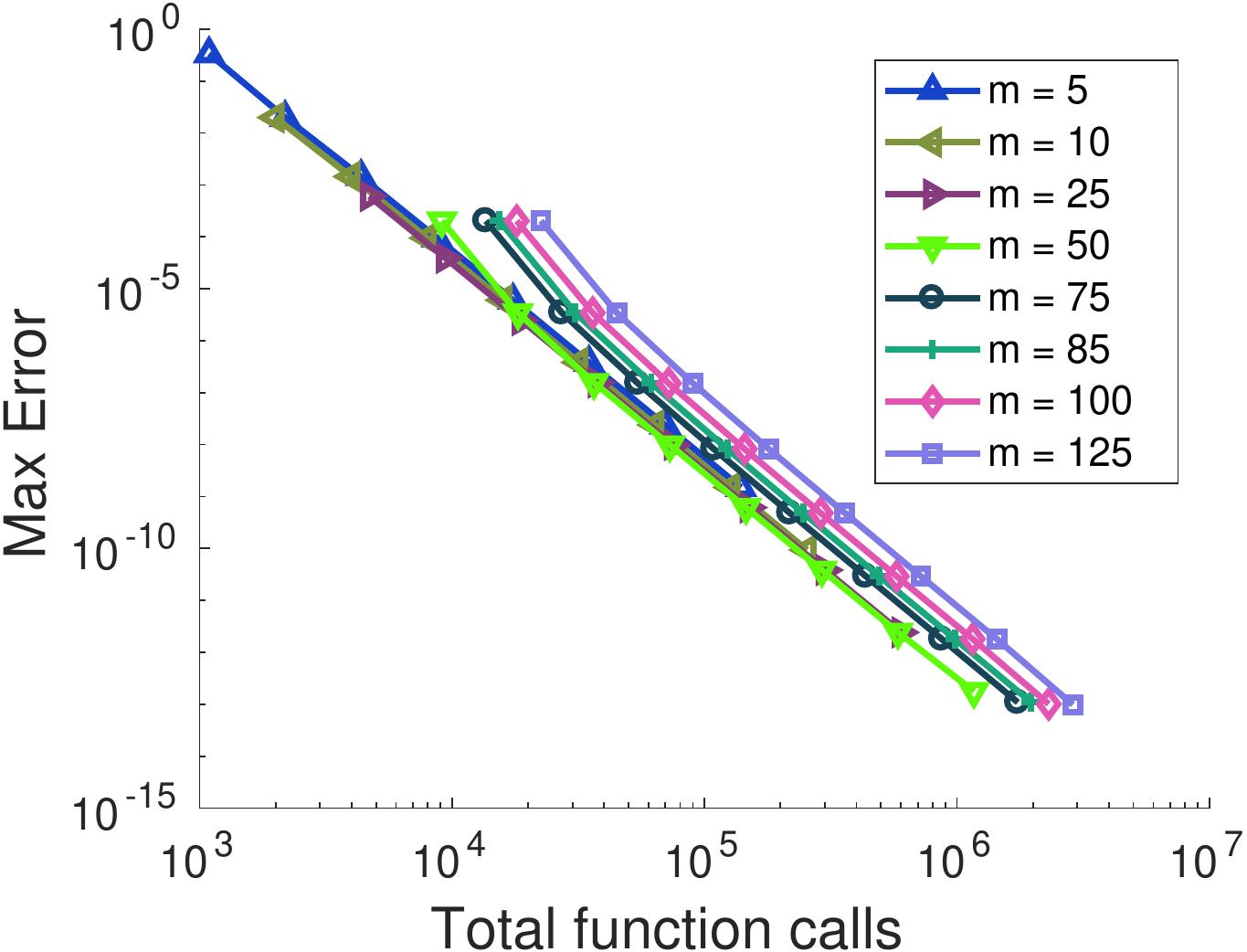}
\caption{}
\label{fig:estepprocess_tot}
\end{subfigure}
\caption{Efficiency plots for {\merkfour} applied to the
  one-directional coupling test, resulting from various $m$ factors.}
\label{fig:estepprocess_eff}
\end{figure}
We first note that both plots show a group of $m$ values with
identical efficiency, along with other less efficient results.  In
Figure \ref{fig:estepprocess_slo}, the more efficient group is
comprised of \emph{larger} $m$ values, whereas in Figure
\ref{fig:estepprocess_tot} the more efficient group has \emph{smaller}
$m$ values.  This is unsurprising, since increases in $m$ for a fixed
$H$ correspond to decreases in $h$, leading to accuracy improvements
at the fast time scale alone.  While this will results in increased
total function calls, the number of slow function calls will remain
fixed.  We therefore define the ``optimal'' $m$ as the value where the
fast and slow solution errors are balanced.  Hence, in Figure
\ref{fig:estepprocess_tot} this corresponds to the largest $m$ that
remains in the more efficient group, and in Figure
\ref{fig:estepprocess_slo} this corresponds to the smallest $m$ that
remains in the more efficient group.  Inspecting both plots in Figure
\ref{fig:estepprocess_eff}, the optimal value for {\merkfour} on this
problem is $m=50$.  Carrying out a similar process for the other
methods on this problem, {\merkthree} has an optimal value of
$m = 75$, {\merkfive} $m = 25$, and {\mis} has an optimal value of
$m = 75$.

Using these $m$ values, In Figure \ref{fig:fasttoslo_conv} we plot the
convergence results for the four methods on this problem, confirming
the analytical orders of convergence, with errors stagnating around
$10^{-13}$ due to accumulation of floating-point roundoff.  While we
find slightly better-than-expected convergence rates for the
MERK methods, and only the expected rate for {\mis}, we do not draw
conclusions regarding this behavior.

\begin{figure}[h!]
\centering
\includegraphics[width = 0.49\textwidth]{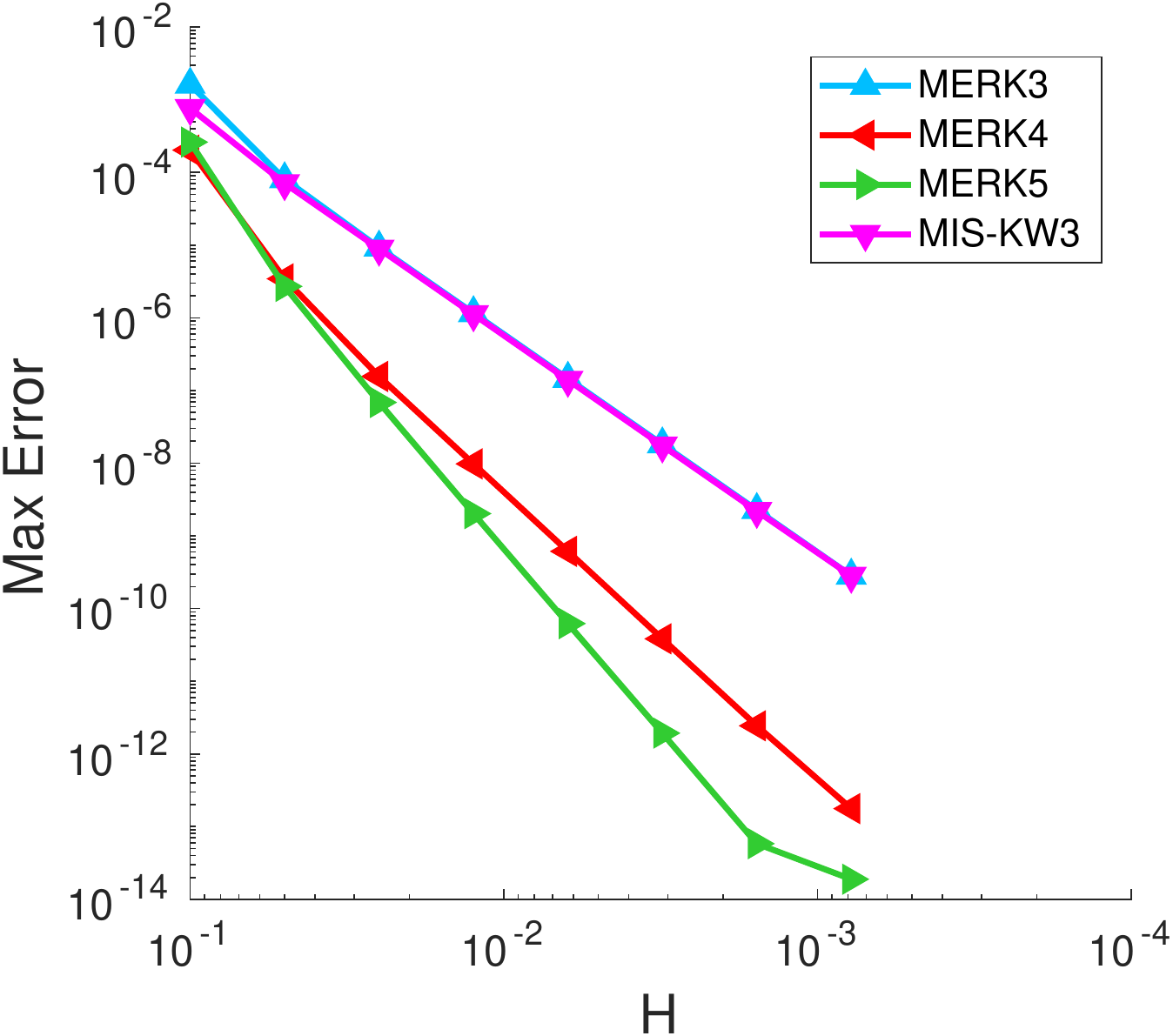}
\caption{One-directional coupling convergence. \small Best fit
  convergence rates are 3.16, 4.28, 5.26 and 3.04 ({\merkthree},
  {\merkfour}, {\merkfive} and {\mis}, respectively).}
\label{fig:fasttoslo_conv}
\end{figure}

Similarly, in Figure \ref{fig:fasttoslo_eff} we plot both the
``total'' and ``slow-only'' efficiency of each method on the
one-directional test problem.  When measuring only slow function
calls, both {\mis} and {\merkthree} tie for errors larger than
$10^{-6}$, {\merkfour} is the most efficient for errors between
$10^{-6}$ and $10^{-12}$ and {\merkfive} is the most efficient at the
tightest error values.  When the fast function calls are given equal
weight as the slow, however, {\mis} is the most efficient at errors
larger than $10^{-8}$, while {\merkfive} is the most efficient at
tighter error thresholds.

\begin{figure}[h!]
\centering
\begin{subfigure}[b]{0.49\textwidth}
\includegraphics[width = \textwidth]{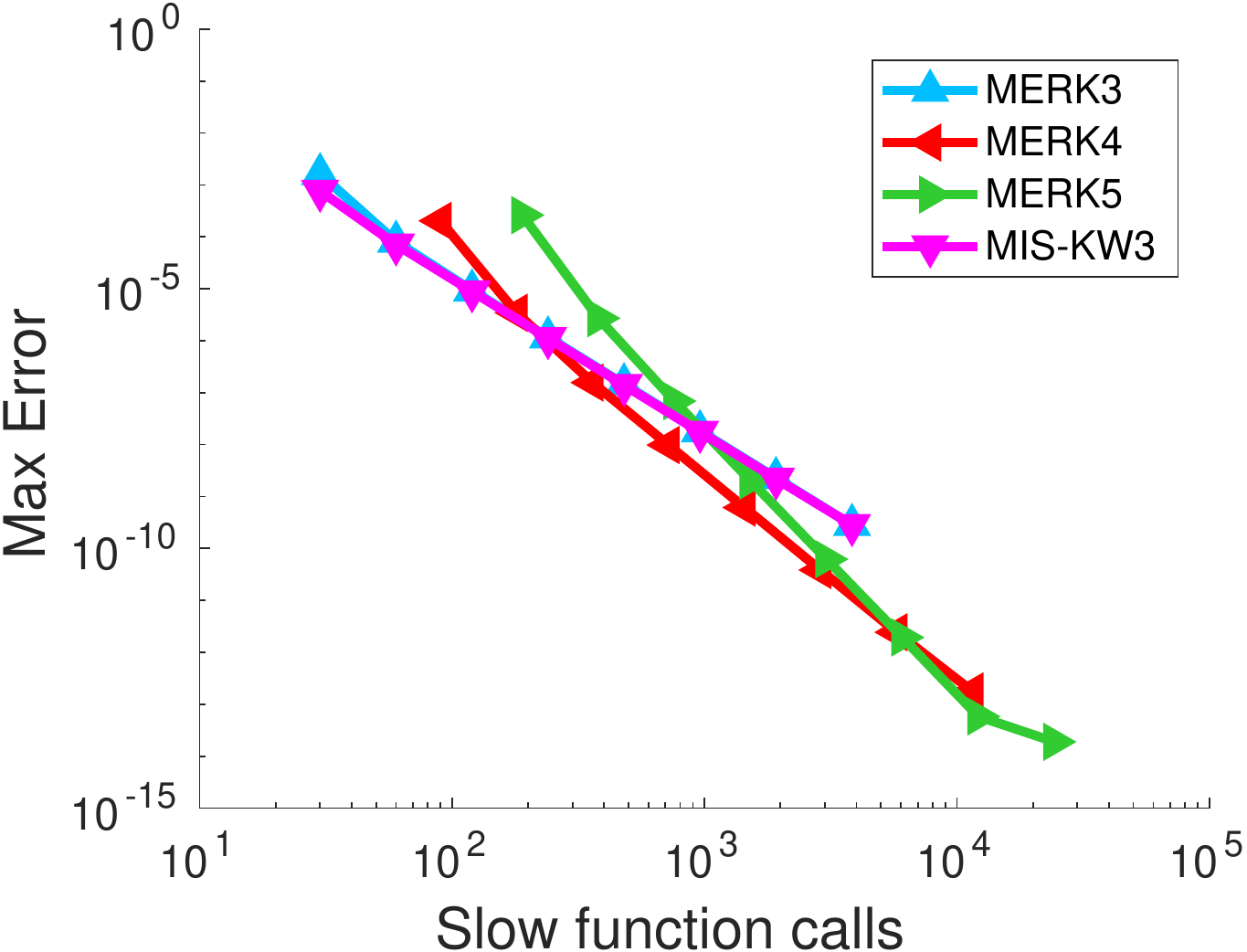}
\caption{}
\label{fig:fasttoslo_slo}
\end{subfigure}
\begin{subfigure}[b]{0.49\textwidth}
\includegraphics[width = \textwidth]{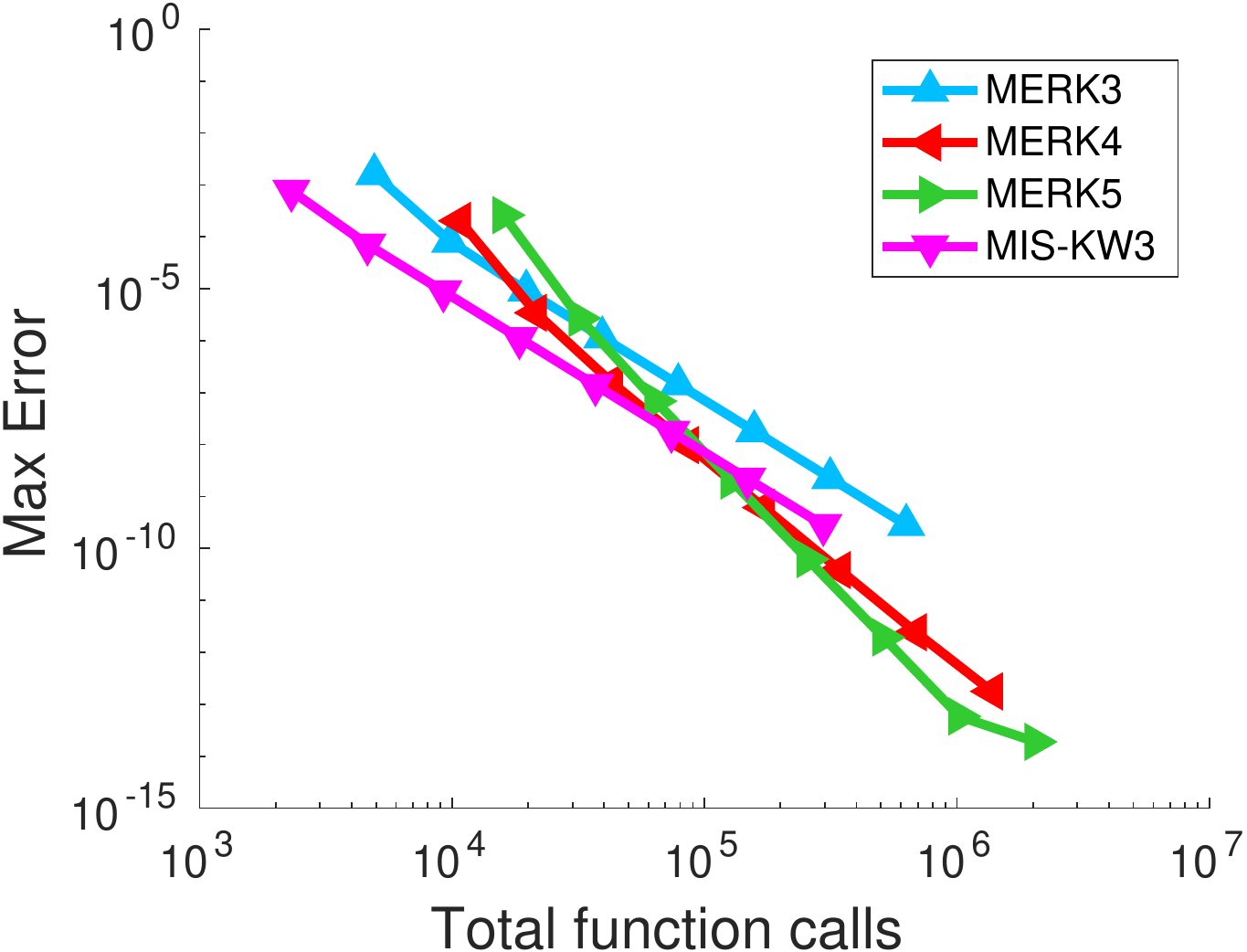}
\caption{}
\label{fig:fasttoslo_tot}
\end{subfigure}
\caption{One-directional coupling efficiency.  The most efficient
  method depends on how ``cost'' is measured, as well as on the
  desired accuracy.}
\label{fig:fasttoslo_eff}
\end{figure}

%__________________________________________________________________________%
%                      Fast-slow Coupling                                  %
%__________________________________________________________________________%

\subsubsection{Bi-directional coupling}
\label{subsubsec:bidirectional}

Taking inspiration from the preceding one-directional test, we
designed a problem with coupling between both the fast and slow
components to further demonstrate the flexibility and robustness of
MERK methods. To this end, we consider the following test problem
\begin{align}
  \begin{bmatrix}
    u \\ v \\ w
  \end{bmatrix}' &= \begin{bmatrix}
    0 & 100 & 1 \\ -100 & 0 & 0 \\ 1 & 0 & -1
  \end{bmatrix}\begin{bmatrix}
    u \\ v \\w
  \end{bmatrix},\label{eq:bidirectional_pb}\qquad
  \mathbf{u}(0) = \begin{bmatrix} 9001/10001\\
    100000/10001 \\ 1000\end{bmatrix},
\end{align}
over $t\in (0,2]$.  Converting to multirate form \eqref{eq1},
we set $\mathcal{L}$ and $\mathcal{N}(t,\mathbf{u}(t))$ as:
\begin{align*}
  \mathcal{L} = \begin{bmatrix}
    0 & 100 & 0 \\ -100 & 0 & 0 \\ 1 & 0 & 0
  \end{bmatrix},\hspace{7mm} \mathcal{N}(t,\mathbf{u}(t)) = \begin{bmatrix}
    w \\ 0 \\-w
  \end{bmatrix}.
\end{align*}
While this is a linear test problem that may be solved using the
matrix exponential, this solution is difficult to represent in
closed-form, and so we use a reference solution for convenience. 
Using the previously-described approach for determining the optimal
time-scale separation factor $m$ for each method on this problem, we
have $m = 50$ for {\merkthree} and {\merkfour}, $m = 10$ for
{\merkfive} and $m=25$ for {\mis}.

In Figure \ref{fig:fastslocoup_conv} we plot the convergence rates of
each method on this test problem, again confirming the analytical
orders of convergence, with errors stagnating around $10^{-12}$ due to
the reference solution accuracy.

\begin{figure}[h!]
\centering
\includegraphics[width = 0.49\textwidth]{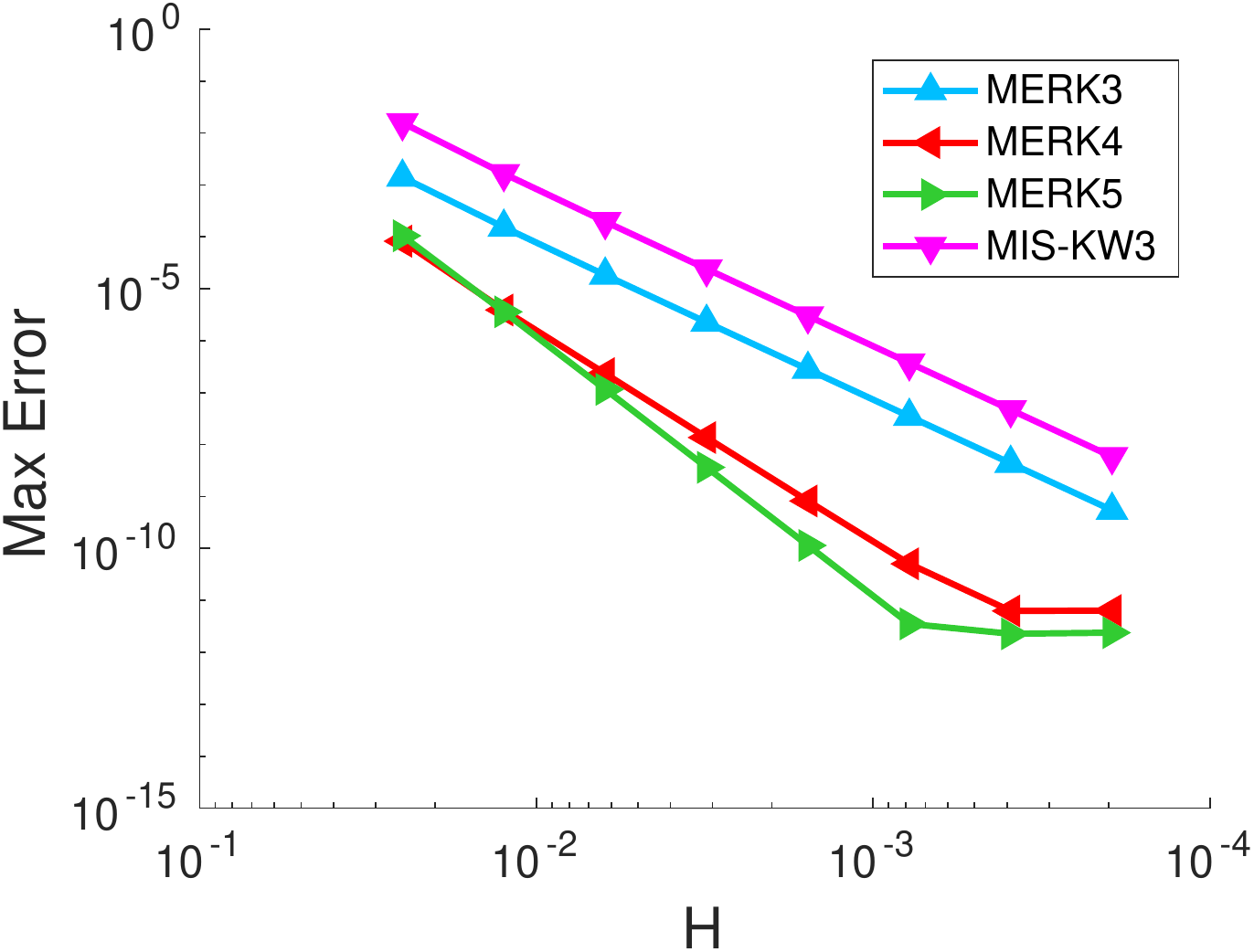}
\caption{\small Bi-directional coupling convergence. Best fit
  convergence rates are 3.03, 3.99, 4.97 and 3.06 ({\merkthree},
  {\merkfour}, {\merkfive}, {\mis}, respectively).}
\label{fig:fastslocoup_conv}
\end{figure}

Similarly, in Figure \ref{fig:fastslocoup_eff} we plot both the
``slow-only'' and ``total'' efficiency plots for this problem.  Here,
when measuring only the slow function calls, the most 
efficient method is {\merkthree} at error thresholds above $10^{-5}$,
and {\merkfive} for smaller error values.  Strikingly, when
considering the total number of function calls, the {\merkfive}
is the most efficient at nearly all error thresholds.  We note,
however, that the optimal time-scale separation factor for {\merkfive}
is $m=10$ for this problem, which results in reduced fast function
calls per slow step, and hence an overal reduction in total function
calls.

\begin{figure}[h!]
\centering
\begin{subfigure}[b]{0.49\textwidth}
\includegraphics[width = \textwidth]{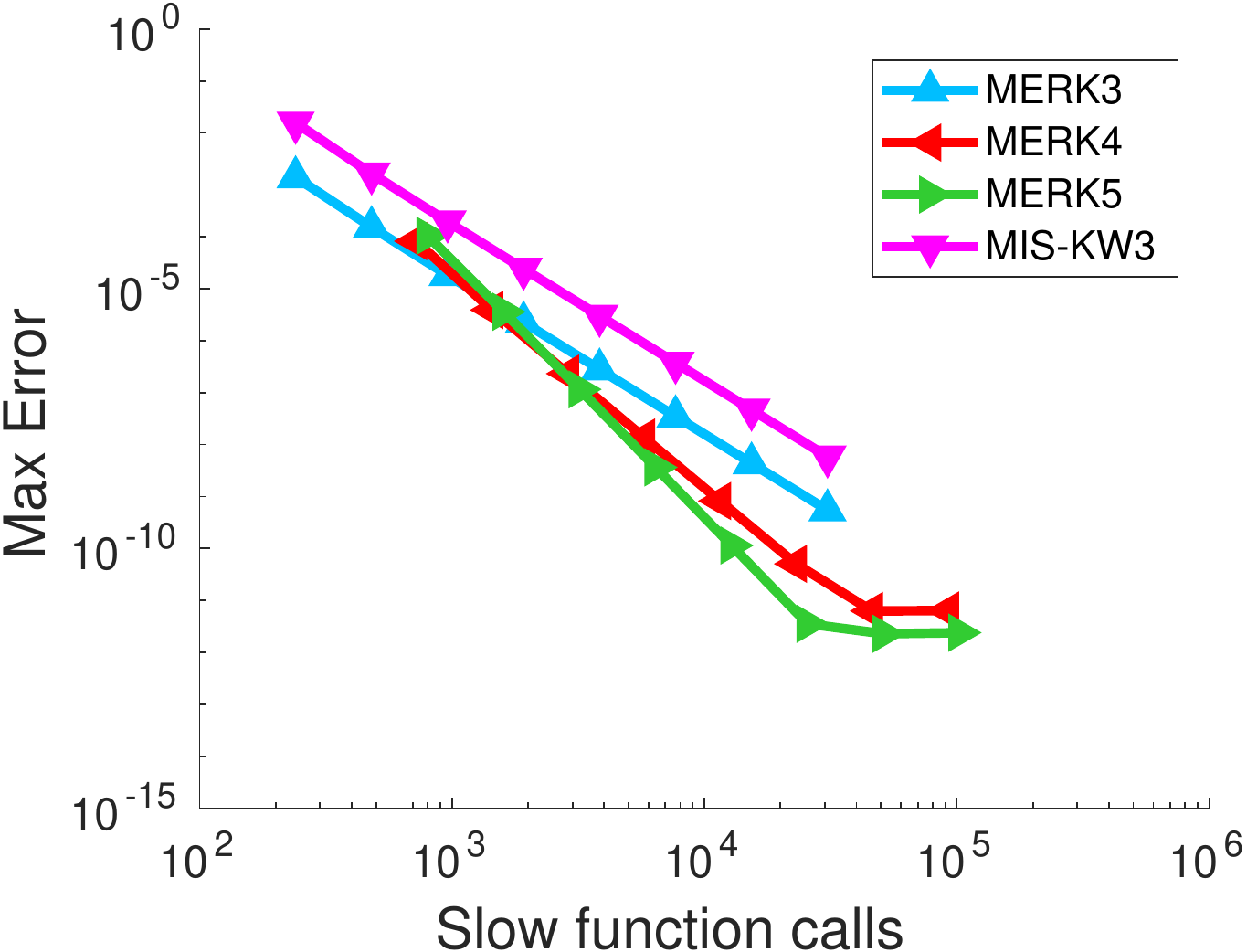}
\caption{}
\label{fig:fastslocoup_slo}
\end{subfigure}
\begin{subfigure}[b]{0.49\textwidth}
\includegraphics[width = \textwidth]{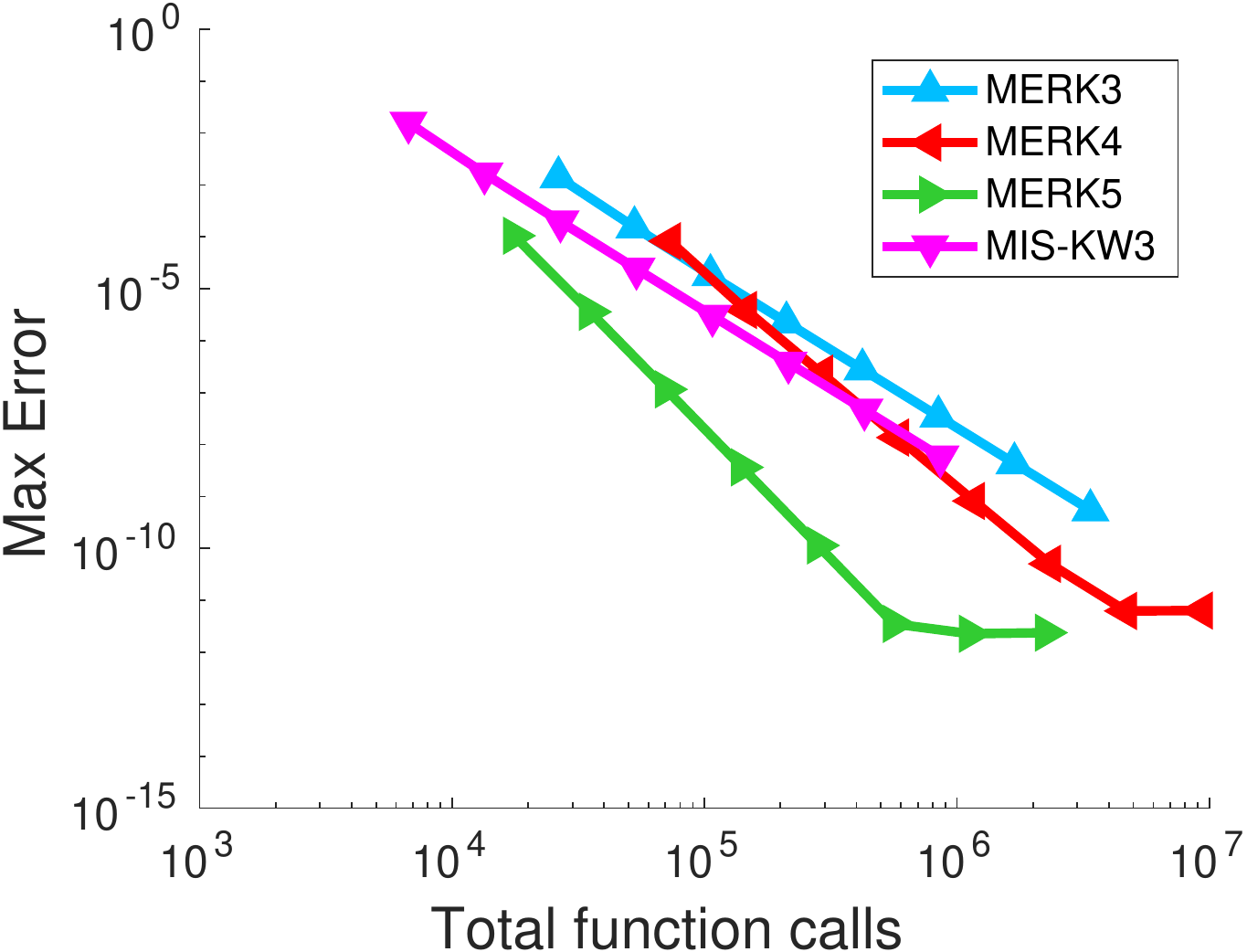}
\caption{}
\label{fig:fastslocoup_tot}
\end{subfigure}
\caption{Bi-directional coupling efficiency. Again, the most efficient
  method depends on how ``cost'' is measured, as well as on the
  desired accuracy, however {\merkfive} demonstrates the best overall
  performance.}
\label{fig:fastslocoup_eff}
\end{figure}

%_________________________________________________________________________%
%                 Different inner methods                                 %
%_________________________________________________________________________%
\subsection{Variations in the fast method}
\label{subsec:fast}

We finish by demonstrating the effects of using inner methods with
differing orders of accuracy.  Here, we consider only the
\texttt{MERK} methods, applied to the bi-directional coupling problem
\eqref{eq:bidirectional_pb}.  Here, we vary the order of method
applied for computing both the internal stage solutions \eqref{eq13nc},
$q$, and the step solution \eqref{eq13nd}, $r$.  Recalling the
convergence theory presented in Theorem \ref{theorem2}, a MERK
method of order $p$ should use inner methods of orders $q\ge
p-1$ and $r \ge p$. However, in these tests we apply other variations
on orders to ascertain whether (a) the inner methods could have even lower order
and still obtain overall order $p$, or (b) use of higher-order inner
methods can result in overall convergence higher than $p$.  We present
the best-fit convergence rates for this ensemble of tests in Table
\ref{table:convrates}.

These numerical results show that in fact the inner method order
requirements presented in Theorem \ref{theorem2} are both
necessary and sufficient, i.e., the least-expensive combination for
attaining a MERK method of order $p$ is to compute stage solutions
\eqref{eq13nc} using an inner method of order $p-1$, and the time step
solution \eqref{eq13nd} using an inner method of order $p$.
Furthermore, use of higher-order inner methods with orders $q=r>p$
\emph{does not} result in overall order higher than $p$, due to the
first term $C_1 H^p$ in Theorem \ref{theorem2}, that corresponds to
the coupling between the fast and slow processes.
% \rem{Luan: Does this a direct consequence of Theorem 3.4 due to the first error term $C_1 H^p$?
% (I meant since  $\| u_n -u(t_n)\| \leq C_1 H^p + C_2 Hh^q +C_3 h^r=C_1 H^p + \tfrac{C_2}{m^q} H^{q+1} +\tfrac{C_3}{m^r}  H^r
% = (C_1 +\tfrac{C_2}{m^q} H^{q+1-p}+\tfrac{C_3}{m^r} H^{r-p}) H^p $, so if $q=r>p$ then we still get $CH^p$ only, i.e., cannot get order higher than $p$).}

\begin{table}[h!]
\centering
\begin{tabular}{|c|c|c|c|c|c|c|c|c|}
\hline
\multicolumn{3}{|c|}{{\merkthree}($p=3$)} & \multicolumn{3}{|c|}{{\merkfour}($p=4$)} & \multicolumn{3}{|c|}{{\merkfive}($p=5$)} \\
\hline
$q$ & $r$ & Observed order & $q$ & $r$ & Observed order & $q$ & $r$ & Observed order \\
\hline
2 & 2 & 2.00 & 3 & 3 & 3.01 & 4 & 4 & 4.00 \\
\hline
3 & 2 & 2.00 & 4 & 3 & 3.01 & 5 & 4 & 4.00 \\
\hline
2 & 3 & 3.03 & 3 & 4 & 3.99 & 4 & 5 & 4.97 \\
\hline
3 & 3 & 3.03 & 4 & 4 & 3.99 & 5 & 5 & 4.97 \\
\hline
4 & 4 & 3.03 & 5 & 5 & 3.99 & 6 & 6 & 4.96 \\
\hline
\end{tabular}
\caption{Convergence rate dependence on inner ODE solvers.}
\label{table:convrates}
\end{table}

%%% Local Variables:
%%% mode: latex
%%% TeX-master: "MERK"
%%% End:

%%%%%%%%%%%%%%%%%%%%%%%%%%%%%%%%%
\section{Conclusion}
\label{sec:conclusion}

We propose a novel class of multirate methods constructed from
explicit exponential Runge--Kutta methods, wherein the action of the
matrix exponential is approximated via solution of ``fast'' initial
value problems for each ExpRK stage.  Algorithmically, these methods
offer a number of desirable properties.  Since these are created
through defining a set of modified IVPs (like (R)MIS and MRI-GARK
methods), MERK implementations have near complete freedom 
in evolving the problem at the fast time scale; however, unlike (R)MIS
and MRI-GARK, MERK methods may utilize inner solvers of reduced
accuracy for the internal stages.  Additionally, since the
MERK structure follows directly from ExpRK methods satisfying Theorem
\ref{theorem1}, derivation of high-order MERK methods, including
versions supporting embeddings for temporal adaptivity, is much
simpler than for alternate multirate frameworks.  As a result, MERK
methods constitute the first multirate algorithms of order five,
without requiring deferred correction or extrapolation techniques.
Furthermore, the proposed approach may be similarly applied to
exponential Rosenbrock methods, allowing for problems where the fast
time scale is nonlinear, although such methods are not considered in
this work.

In addition to proposing the MERK class of multirate methods and
providing rigorous analysis of their convergence, we provide numerical
comparisons of the performance of multiple MERK and MIS methods on a
variety of multirate test problems.  Based on these experiments, we
find that the MERK methods indeed exhibit their theoretical orders of
convergence, including tests that clearly demonstrate our primary
convergence result in Theorem \ref{theorem2}.  Furthermore, the
proposed methods compare favorably against standard MIS multirate
methods, particularly when increased accuracy is desired and for
problems wherein the ``slow'' right-hand side function is
significantly more costly than the ``fast.''

This work may be extended in numerous ways.  As alluded to above,
extensions of these approaches to explicit exponential Rosenbrock
methods are straightforward, and are already under investigation.
Additionally, extensions to higher order will follow from related
developments of higher-order exponential methods.  Finally, we plan to
investigate the use of embeddings at both the fast and slow time
scales to perform temporal adaptivity in both $H$ and $h$ for
efficient, tolerance-based calculations.

%%% Local Variables:
%%% mode: latex
%%% TeX-master: "MERK"
%%% End:

%%%%%%%%%%%%%%%%%%%%%%%%%%%%%%%%%
\section*{Acknowledgments}
The first author would like to thank Prof.~Hochbruck and Dr.~Demirel for their fruitful discussions during his visit at the Karlsruhe
Institute of Technology (KIT) in 2013 under the support of the DFG Research Training Group
1294 \textquotedblleft Analysis, Simulation and Design of
Nanotechnological Processes\textquotedblright.

%%% Local Variables:
%%% mode: latex
%%% TeX-master: "MERK"
%%% End:

%%%%%%%%%%%%%%%%%%%%%%%%%%%%%%%%%
%%% bibliography -----------------------------------------------------------
\bibliographystyle{siamplain}
\bibliography{references}

\end{document}